\numberwithin{equation}{section}
\newtheorem{thm}{Theorem}[section]
\newtheorem{prop}[thm]{Proposition}
\newtheorem{lem}[thm]{Lemma}
\theoremstyle{definition}
\newtheorem{remark}[thm]{Remark}
\newcommand{\sH}{{\mathcal H}}
\newcommand{\sM}{{\mathcal M}}
\newcommand{\sD}{{\mathcal D}}
\newcommand{\sX}{{\mathcal X}}
\def\de{\Delta}
\def\g{\gamma}
\def\ga{\Gamma}
\def\la{\Lambda}
\def\om{\Omega}
\def\tht{\Theta}
\def\ts{\times}
\def\iy{\infty}
\def\im{{\rm Im\, }}
\def\kr{{\rm Ker\, }}
\def\diag{{\rm diag\, }}
\def\BC{{\mathbb C}}
\def\BD{{\mathbb D}}
\def\BM{{\mathbb M}}
\def\BT{{\mathbb T}}
\newcommand{\mat}[2]{\ensuremath{\left[\begin{array}{#1}#2\end{array} \right]}}
\newcommand{\ands}{\quad\mbox{and}\quad}
\newcommand{\wtil}{\widetilde}
\newcommand{\half}{\frac{1}{2}}
\newcommand{\Up}{\Upsilon}
\newcommand{\fR}{\mathfrak{R}}
\newcommand{\nn}{\notag}
\renewcommand{\theequation}{\arabic{section}.\arabic{equation}}
\begin{document}

\author{A.E. Frazho}

\address{%
Department of Aeronautics and Astronautics, Purdue University\\
West Lafayette, IN 47907, USA}

\email{frazho@ecn.purdue.edu}


\author{S. ter Horst}

\address{%
Unit for BMI, North-West University\\
Private Bag X6001-209, Potchefstroom 2520, South Africa}

\email{sanne.terhorst@nwu.ac.za}


\author{M.A. Kaashoek}

\address{%
Department of Mathematics,
VU University Amsterdam\\
De Boelelaan 1081a, 1081 HV Amsterdam, The Netherlands}

\email{m.a.kaashoek@vu.nl}

\thanks{The research of the first author was partially supported by a visitors grant from NWO (Netherlands Organization for Scientific Research).}


\subjclass{Primary 47A57; Secondary 47A68, 93B15, 47A56}

\keywords{Leech problem, stable rational matrix functions, commutant lifting theorem, state space representations, algebraic Riccati equation, model space}

\title[State space solutions for a suboptimal rational Leech problem II]
{State space formulas for a suboptimal rational Leech problem II: Parametrization of all solutions}

\date{}

\maketitle

\begin{abstract}
For  the strictly positive case (the suboptimal case), given stable rational matrix functions $G$ and $K$, the set of all $H^\iy$ solutions $X$ to the Leech problem associated with $G$ and $K$, that is, $G(z)X(z)=K(z)$ and $\sup_{|z|\leq 1}\|X(z)\|\leq 1$, is presented as the range of a linear fractional representation of which  the coefficients are presented  in state space form. The matrices involved in the  realizations are computed from state space realizations of the data functions $G$ and $K$. On the one hand the results are based  on the commutant lifting theorem and on the other hand on  stabilizing solutions of algebraic Riccati equations related to  spectral factorizations.
\end{abstract}

\setcounter{section}{0}
\section{Introduction}\label{intro}\setcounter{equation}{0}

The present paper is a continuation of  the paper \cite{FtHK-IEOT}.  As in \cite{FtHK-IEOT} we have given two stable rational matrix functions $G$ and $K$ of sizes  $m\ts p$ and $m\ts q$, respectively,  and   we are interested in   $p\times q$  matrix-valued $H^\iy$  solutions $X$ to the Leech problem:
\begin{equation}\label{Leech1}
G(z)X(z)=K(z) \quad  (|z|<1),\quad \|X\|_\iy=\sup_{|z|<1}\|X(z)\|\leq 1.
\end{equation}
Here \emph{stable} means that  the poles of the functions belong to the set $|z|>1$, infinity included.  In particular, the given functions $G$ and $K$ (as well as  the unknown function $X$) are matrix-valued $H^\infty$ functions.

As  is well-known, a result by R.W. Leech dating from the early seventies, see \cite{Lch}  (and \cite{LchCom}), tells us that for arbitrary  matrix-valued $H^\iy$  functions $G$ and $K$, not necessarily rational,  the problem \eqref{Leech1} is solvable if and only if  the operator $T_GT_G^*-T_KT_K^*$  is nonnegative. Here
\[
T_G:\ell^2_+(\BC^p)\to\ell^2_+(\BC^m) \ands T_K:\ell^2_+(\BC^q)\to\ell^2_+(\BC^m)
\]
are the  (block) Toeplitz operators defined by $G$ and $K$ respectively. Since then it has been shown by various authors that the Leech problem can been solved by using general methods for dealing with metric constrained completion and interpolation problems, including commutant lifting; see the review \cite {LchCom} and the references therein.

In the present paper, as in \cite{FtHK-IEOT}, we deal with the \emph{suboptimal} case where the operator
\begin{equation}
\label{poscond1a}T_GT_G^*-T_KT_K^* \ \mbox{is strictly positive}.
\end{equation}
Note that an $H^\iy$ solution to the Leech problem \eqref{Leech1} exists if and only if the operator $T_GT_G^*-T_KT_K^*$ is positive, see \cite{Lch}. In \cite{FtHK-IEOT}, using  commutant lifting theory and state space methods from mathematical system theory, we proved that the maximum entropy solution to the Leech problem \eqref{Leech1} with rational data is  a stable rational matrix function and we computed a state space formula for this solution. The focus of the current paper is on computing all solutions.

In a few recent publications \cite{T13,tH13,FtHK14}, a different approach to the Leech problem was presented, also leading to state space formulas for a solution. Although it is not hard to modify this approach to compute a set of rational matrix solutions, it remains unclear at this stage if the method is suitable to compute the set of all solutions, cf., \cite{FtHK14-MTNS}.

One of the additional complications in describing the set of all solutions in our approach is that it requires an explicit description of the value at zero  $\tht_0$  of the inner  function $\tht$  associated with the  model  space $\im T_G^*$. Another difficulty,  which already appears in \cite{FtHK-IEOT},  is the fact that the intertwining contraction $\la=T_G^*(T_GT_G^*)^{-1}T_K$  appearing in the commutant lifting setting of the Leech problem is a rather complicated operator. If $K\not =0$ this operator  is not finite dimensional as in the classical Nevanlinna-Pick interpolation problem or a compact operator as in the Nehari problem for the Wiener class but, in general, $\la$ is an infinite dimensional operator which can be Fredholm or invertible (cf., Proposition  \ref{P:coronaCLT} at the end of the present paper).

Before stating our main result, we need some preliminaries. As in \cite{FtHK-IEOT},  the starting point is the fact, well known from mathematical systems theory, that rational matrix functions admit finite dimensional state space realizations. We shall assume that the stable rational matrix function $\begin{bmatrix} G&K \end{bmatrix}$ is given in realized form:
\begin{equation}\label{GKreal}
\mat{cc}{G(z)& K(z)}=\mat{cc}{D_1&D_2} + z C(I_n-zA)^{-1}\mat{cc}{B_1&B_2}.
\end{equation}
Here $I_n$ is the $n\ts n$ identity matrix and $A$, $B_1$, $B_2$, $C$, $D_1$ and $D_2$ are matrices of appropriate size. Without loss of generality we may assume $A$ is a stable matrix, i.e., all eigenvalues of $A$ are in the open unit disc $\BD$, and the pair $\{C,A\}$ is observable. The latter means that $CA^\nu x=0$ for $\nu=0,1,2,\ldots$ implies $x$ is the zero vector in $\BC^n$.
For $j=1,2$ let $P_j$ be the controllability gramians associated with the pair $\{A,B_j\}$, i.e., $P_j$ is the unique solution to the Stein equation
\begin{align}\label{Stein}
P_j-AP_j A^*=B_jB_j^*.
\end{align}

As Theorem 1.1   in \cite{FtHK-IEOT} shows,  since $G$ and $K$ are rational matrix $H^\iy$ functions,  it is possible to present a solution criterion for the Leech problem in terms of matrices derived from the matrices appearing in the realization \eqref{GKreal}. This criterion involves an algebraic Riccati equation that appears in the spectral factorization of the rational $m\ts m$ matrix function
\begin{align}\label{defR}
R(z)=G(z)G^*(z)-K(z)K^*(z).
\end{align}
Here $G^*(z)=G(\bar{z}^{-1})^*$ and $K^*(z)=K(\bar{z}^{-1})^*$. It was computed in \cite{FtHK14} that $R$ admits the state space realization
\begin{align*}
R(z) = zC(I - zA)^{-1}\ga + R_0 + \ga^*(zI - A^*)^{-1}C^*,
\end{align*}
with $R_0$ and $\ga$ the matrices given by
\begin{align}
R_0 &= D_1D_1^* -D_2D_2^* +C(P_1 - P_2)C^*,\label{defR0}\\[.1cm]
 \ga &= B_1D_1^* -B_2D_2^* + A(P_1 - P_2)C^*.  \label{defGa}
\end{align}
Under the hypothesis that $T_GT_G^*-T_KT_K^*$ is strictly positive, the Toeplitz operator $T_R$ defined by $R$ is also strictly positive. The latter is equivalent, see Remark 1.3 in \cite{FtHK-IEOT}, to the existence of a stabilizing solution $Q$ to the algebraic Riccati equation
\begin{equation}\label{are}
Q  =  A^* Q  A + ( C - \Gamma^* Q  A )^*
( R_0 - \Gamma^* Q  \Gamma )^{-1} ( C - \Gamma^* Q  A ).
\end{equation}
In this context, for  the solution $Q$ to  \eqref{are} to be stabilizing   means that the matrix $R_0-\ga^* Q \ga$ must be strictly positive and that the matrix
\begin{align}\label{A0Delta}
A_0= A-\ga\de^{-1}(C-\ga^*QA),\quad\mbox{ with }\quad \de=R_0-\ga^*Q\ga,
\end{align}
must be stable. These two stability conditions  guarantee  that there exists just one stabilizing solution $Q$ to \eqref{are}. Furthermore, since the pair $\{C,A\}$ is observable, the stabilizing solution $Q$ is invertible, cf., \cite[Eq. (1.18)]{FtHK-IEOT}. Theorem 1.1 in \cite{FtHK-IEOT} now states that $T_GT_G^*-T_KT_K^*$ is strictly positive if and only if there exists a stabilizing solution $Q$ to \eqref{are} such that
\begin{align*}
Q^{-1}+P_2-P_1\mbox{ is strictly positive.}
\end{align*}

To state our  main theorem  we need to consider an additional algebraic  Riccati equation.  Note that $T_GT_G^*\geq T_GT_G^*-T_KT_K^*$. Since $T_GT_G^*-T_KT_K^*$ is strictly positive, it follows  that  the same holds true for $T_GT_G^*$.  This allows us to apply the results of the previous paragraph with the function $K$ identically equal to zero, and with $B_2=0$ and $D_2=0$. This leads  to a second  algebraic Riccati equation:
\begin{equation}\label{are2}
Q_0  =  A^* Q_0  A + ( C - \Gamma_0^* Q_0  A )^*
( R_{10} - \Gamma_0^* Q_0  \Gamma_0 )^{-1} ( C - \Gamma_0^* Q_0  A ).
\end{equation}
Here
\begin{equation*}
R_{10} = D_1D_1^* +CP_1C^*,\qquad \ga_0 = B_1D_1^* + AP_1C^*.
\end{equation*}
Since  $T_G$ is right invertible and  the pair $\{C,A\}$ is observable, it follows that \eqref{are2} has a unique stabilizing solution $Q_0$ such that $Q_0^{-1}-P_1$ is strictly positive.

Finally, since $T_GT_G^*$ is strictly positive,  the projection on $\kr T_G=\ell^2_+(\BC^p)\ominus \im T_G^*$ is given by $P_{\kr T_G}=I_p-T_G^*(T_GT_G^*)^{-1}T_G=T_\tht T_\tht^*$, with $\tht$ the inner function associated with the model space $\im T_G^*$. This yields that the value $\tht_0$ of $\tht$ at zero  is uniquely determined, up to a constant unitary matrix of order $p-m$ on the right,   by
\begin{equation} \label{deftht0}
 \tht_0\tht_0^*=I_p-E_p^*T_G^*(T_GT_G^*)^{-1}T_GE_p.
\end{equation}
 Here, for any positive integer $k$, we write $E_k $ for the canonical embedding of $\BC^k$ onto the first coordinate space of $\ell_+^2(\BC^k)$, see \eqref{defEk} below. The fact that the number of columns of  $\tht_0$ is $p-m$ is explained in Remark \ref{rem22} below.  Since the realization $G(z)=D_1+zC(I_n-zA)^{-1}B_1$ is a  stable state space realization, we can apply Theorem 1.1 in  \cite{FKR12} to derive a formula for $\tht_0$ in terms of  the matrices $A$, $B_1$, $C$, $D_1$ and related matrices. Therefore in what follows we shall assume $\tht_0$  is given. We shall refer to $\tht_0$ as the \emph{left minimal rank factor  determined by \eqref{deftht0}}. See Lemma \ref{L:thtids}  in the next section for some further insight in the role of  $\tht_0$.

We are now ready to state our   main theorem which  provides  a characterization of all solutions to the suboptimal rational Leech problem \eqref{Leech1} in the form of the range of a linear fractional transformation.
\begin{thm}\label{T:main1}
Let $G$ and $K$ be stable rational matrix functions of sizes  $m\ts p$ and $m\ts q$, respectively, such that $T_GT_G^*-T_KT_K^*$ is strictly positive, and assume that there is  no non-zero $x\in \BC^p$ such that $G(z)x$ is identically zero on the open unit disc $\BD$. Let $\begin{bmatrix}G&K \end{bmatrix}$ be given by the observable stable realization \eqref{GKreal}. Then the set of solutions to the Leech problem \eqref{Leech1} appears as the range of the linear fractional transformation $Y\mapsto X$ given by
\begin{align}\label{LFTsols}
X(z)=(\Up_{12}(z)+\Up_{11}(z)Y(z))(\Up_{22}(z)+\Up_{21}(z)Y(z))^{-1}.
\end{align}
Here the free parameter $Y$ is any $(p-m)\ts q$ matrix-valued $H^\iy$  function such that $\|Y\|_\iy\leq 1$, and
\begin{align}
\Up_{11}(z)&={\tht_0}\Delta_1^{-1} -zC_1(I-z A_0)^{-1}Q^{-1}(Q^{-1}+P_2-P_1)^{-1}B_1\tht_0 \Delta_1^{-1},\notag\\
\Up_{21} (z) &=-zC_2(I-zA_0)^{-1}Q^{-1}(Q^{-1}+P_2-P_1)^{-1}B_1{\tht_0}\Delta_1^{-1},\nn \\
\Up_{12}(z) &=(D_1^*\de^{-1}D_2+D_1^*C_0\om C_2^*+B_1^*QB_0)\de_0^{-1}+\label{UpsFin}\\
&\hspace{5cm}+ zC_1(I-z A_0)^{-1}B_0 \Delta_0^{-1};\notag\\
\Up_{22} (z) &=\de_0+zC_2(I-zA_0)^{-1}B_0\Delta_0^{-1},\notag
\end{align}
where $A_0$ and $\de$ are given by \eqref{A0Delta}, the matrix $\tht_0$ is the left minimal rank factor determined by \eqref{deftht0}, the matrices $C_j$, $j=0,1,2$, and $B_0$  are given by
\begin{align*}
&C_0=\de^{-1}(C-\ga^* Q A),\quad
C_j=D_j^*C_0+B_j^*QA_0,\ j=1,2,\\
&\hspace{1.6cm}B_0=B_2-\ga \de^{-1}D_2+A_0\om C_2^*,
\end{align*}
with  $\om=(P_1-P_2)(Q^{-1}+P_2-P_1)^{-1}Q^{-1}$, where $Q$   is the stabilizing solution of the Riccati equation \eqref{are}, and $\de_0$ and $\de_1$ are the positive definite matrices determined by
\begin{equation}\label{DeltasFin}
\begin{aligned}
\de_0^2&=I_q+C_2\Omega C_2^*+(D_2-\Gamma^*QB_2)^*\Delta^{-1}(D_2-\Gamma^*QB_2)+B_2^*QB_2,\\
\de_1^2 &=I_{p-m}+{\tht_0}^*B_1^*\big((Q^{-1}+P_2-P_1)^{-1}-(Q_0^{-1}-P_1)^{-1}\big)B_1\tht_0,
\end{aligned}
\end{equation}
where $Q_0$ is the stabilizing solution of the Riccati equation \eqref{are2}.
\end{thm}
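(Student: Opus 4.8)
The plan is to obtain the parametrization \eqref{LFTsols} from the commutant lifting theorem and then to reduce the resulting abstract coefficients to the state space formulas \eqref{UpsFin}--\eqref{DeltasFin}. First I would invoke the commutant lifting reduction established in \cite{FtHK-IEOT}: under the strict positivity \eqref{poscond1a}, the solutions $X$ of \eqref{Leech1} are in bijective correspondence with the contractive intertwining liftings in a commutant lifting problem whose underlying intertwining contraction is $\la=T_G^*(T_GT_G^*)^{-1}T_K$, the model space being $\sH=\im T_G^*$ with associated inner function $\tht$. The hypothesis that no non-zero $x\in\BC^p$ makes $G(z)x$ vanish identically guarantees that $\tht_0$ has exactly $p-m$ columns (cf. Remark \ref{rem22}), so that the free parameter $Y$ ranges over the contractive $(p-m)\ts q$ matrix $H^\iy$ functions.

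Second, I would apply the parametrized version of the commutant lifting theorem, which represents every contractive intertwining lifting as the range of a linear fractional transformation of the form \eqref{LFTsols}. At this stage the four coefficients are abstract operator-valued functions built from the minimal isometric dilation of $\la$ together with the defect operators of $\la$ and of $\la^*$; they are determined uniquely up to the constant unitary freedom in $\tht_0$ on the right. Establishing that this transformation is a genuine bijection onto the solution set requires, in particular, that the denominator $\Up_{22}+\Up_{21}Y$ be invertible in $H^\iy$ for every admissible $Y$, a properness that I would derive from the metric properties of the coefficient matrix $\Up=[\Up_{ij}]$ produced by the commutant lifting construction.

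The core of the argument, and the main obstacle, is the explicit computation of the coefficients $\Up_{ij}$ in state space form. I would split this into three pieces. (a) The spectral factorization of the function $R$ in \eqref{defR}, realized through the stabilizing solution $Q$ of the Riccati equation \eqref{are}: this produces the central factor $\de=R_0-\ga^*Q\ga$ and the closed-loop matrix $A_0$ of \eqref{A0Delta}, and hence the data for $\Up_{21}$, $\Up_{22}$ and the normalization $\de_0$. (b) The inner function $\tht$ of the model space $\im T_G^*$, which corresponds to the case $K\equiv 0$: here I would use the second Riccati equation \eqref{are2} with stabilizing solution $Q_0$ together with the formula for $\tht_0$ from \eqref{deftht0} and Theorem 1.1 of \cite{FKR12}, yielding $\Up_{11}$ and the normalization $\de_1$ in \eqref{DeltasFin}. (c) Combining (a) and (b) with the realization \eqref{GKreal} and the gramians $P_1,P_2$ to assemble $\Up_{12}$ and the auxiliary matrices $C_0,C_1,C_2,B_0$ and $\om$.

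The technical heart is to verify that these state space expressions coincide with the abstract coefficients of the second step. I expect this to rest on repeated use of the Stein equations \eqref{Stein}, the two Riccati identities, and the intertwining relations defining $\la$, in order to collapse products of Toeplitz and Hankel operators into the closed-loop form $zC_j(I-zA_0)^{-1}(\cdots)$. The most delicate point should be the identification of the normalizing factors $\de_0$ and $\de_1$ in \eqref{DeltasFin}: these must be exactly the matrices that render $\Up$ appropriately contractive, and controlling them forces one to track the interplay between the two Riccati solutions $Q$ and $Q_0$ and the invertible matrix $Q^{-1}+P_2-P_1$ throughout the computation.
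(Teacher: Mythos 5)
Your overall strategy --- reduce to commutant lifting with $\la=T_G^*(T_GT_G^*)^{-1}T_K$ acting into the model space $\im T_G^*$, obtain the linear fractional parametrization from a Potapov--Ginzburg transformed version of the commutant lifting theorem, and then collapse the operator-theoretic coefficients to state space form via the two Riccati equations --- is exactly the route the paper takes. However, your step (b) contains a misattribution that would derail the computation. You propose to obtain $\Up_{11}$ from the inner function $\tht$ of the model space, i.e.\ from the ``$K\equiv 0$'' data governed by the second Riccati solution $Q_0$. In fact $\Up_{11}$ is \emph{not} $\tht\de_1^{-1}$: the abstract coefficient works out to $\Up_{11}(z)=\tht_0\de_1^{-1}-zE_p^*T_G^*(I-zS_m^*)^{-1}(T_GT_G^*-T_KT_K^*)^{-1}N\de_1^{-1}$ with $N=S_m^*T_GE_p\tht_0$, so the resolvent kernel is $(T_GT_G^*-T_KT_K^*)^{-1}$ rather than $(T_GT_G^*)^{-1}$, and establishing this already requires a nontrivial three-term cancellation in the commutant lifting formula (Part 3 of the proof of Theorem \ref{Allsol-LF}). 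Consequently the closed form of $\Up_{11}$ involves $C_1$, $A_0$ and $Q$ --- all built from the \emph{first} Riccati solution --- and $Q_0$ enters only through the constant normalization $\de_1$, whose formula $I_{p-m}+\tht_0^*B_1^*\big((Q^{-1}+P_2-P_1)^{-1}-(Q_0^{-1}-P_1)^{-1}\big)B_1\tht_0$ mixes both Riccati solutions and therefore cannot be produced inside a pure ``$K\equiv0$'' computation either. Only in the degenerate case $K\equiv 0$ does $\Up_{11}$ reduce to $\tht$ itself.

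The second ingredient your plan does not identify is the single state-space identity that makes the reduction of $\Up_{11}$, $\Up_{21}$ and $\de_1$ work, namely $(T_GT_G^*-T_KT_K^*)^{-1}N=W_0\,Q^{-1}(Q^{-1}+P_2-P_1)^{-1}B_1\tht_0$, where $W_0$ is the observability operator of the closed-loop pair $\{C_0,A_0\}$. Combined with $E_p^*T_G^*W_0=C_1$, $E_q^*T_K^*W_0=C_2$ and $S_m^*W_0=W_0A_0$, this converts the infinite-dimensional resolvents into the form $zC_j(I-zA_0)^{-1}(\cdots)$ in one step, and applying it once with the given $K$ and once with $K\equiv0$ yields precisely the two summands of $\de_1^2$ (this is where $Q_0$ legitimately enters). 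Without this identity, or an equivalent, the ``repeated use of the Stein equations and Riccati identities'' you invoke has no concrete target. The remaining parts of your outline ($\Up_{12},\Up_{22},\de_0$ from the spectral factorization of $R$ via $Q$; invertibility of the denominator $\Up_{22}+\Up_{21}Y$ from the $J_1,J_2$-inner property of the coefficient matrix) do match the paper.
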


\begin{remark} The functions $\Up_{12}$ and $\Up_{22}$  already appear in \cite{FtHK-IEOT}. More precisely, $\Up_{12}(z)\de_0$ is the function $U(z)$ given by \cite[Eq. (5.14)]{FtHK-IEOT}, and $\Up_{22}(z)\de_0$ is the function $V(z)$ given by \cite[Eq. (5.13)]{FtHK-IEOT}. Note that $\Up_{12}(z)\Up_{22}(z)^{-1}=U(z)V(z)^{-1}$ is the solution which one obtains if the free parameter $Y=0$; this solution is  the   maximum entropy solution given by \cite[Eq. (1.12)]{FtHK-IEOT}. Finally,
the coefficient matrix
\[
\Up=\begin{bmatrix}\Up_{11}&\Up_{12}\\  \Up_{21}&\Up_{22} \end{bmatrix}
\]
has a number of interesting properties which follow from the general theory derived in Section \ref{S:InfDim}. For instance,   $\Up$ is $J_1, J_2$-inner, where $J_1=\diag(I_p, -I_q)$, and $J_2=\diag(I_{p-m}, -I_q)$.
\end{remark}

\begin{remark}  All solutions can also be obtained as the range of a  linear fractional map of Redheffer type:
\[
X(z)=\Phi_{22}(z)+\Phi_{21}(z)Y(z)\big(I-\Phi_{11}(z)Y(z)\big)^{-1}\Phi_{12}(z),
\]
where, as in Theorem \ref{T:main1}, the free parameter $Y$ is any $(p-m)\ts q$ matrix-valued $H^\iy$  function such that $\|Y\|_\iy\leq 1$, and the functions $\Phi_{11}$, $\Phi_{12}$, $\Phi_{21}$ and $\Phi_{22}$ are stable rational matrix functions given by stable state space realizations.  In fact, as expected, these coefficients  are uniquely determined by the identities
\begin{align*}
&\Phi_{11}=-\Phi_{12}\Up_{21},\quad
\Phi_{12}=\Up_{22}^{-1},\\
&\Phi_{21}=\Up_{11}-\Up_{12}\Phi_{12}\Up_{21},\quad
\Phi_{22}=\Up_{12}\Phi_{12}.
\end{align*}
We omit further details.
\end{remark}

\begin{remark}
In terms of the realization  \eqref{GKreal}  the condition  that there is  no non-zero $x\in \BC^p$ such that $G(z)x$ is identically zero  on $\BD$  is equivalent to the requirement that   $ \kr \begin{bmatrix} B_1 & D_1 \end{bmatrix}{}^\top$ consists of the zero vector only.  To see this note that $G(z)x=D_1x+zC(I_n-zA)^{-1}B_1 x$. Hence
\[
G(z)x=0 \ (z\in \BD) \Leftrightarrow\  D_1x= 0 \ \mbox{and} \  CA^\nu B_1 x=0 \ (\nu=0,1,2, \ldots).
\]
Since the pair $\{C,A\}$ is observable, it follows that
\[
G(z)x=0 \ (z\in \BD) \Leftrightarrow \  D_1x= 0   \ \mbox{and} \  B_1x=0  \Leftrightarrow\ x\in \kr \begin{bmatrix} B_1 \\ D_1 \end{bmatrix},
\]
which yields the desired result. The condition  that there is  no non-zero $x\in \BC^p$ such that $G(z)x$ is identically zero  on $\BD$ can also be understood as a minimality condition on some isometric liftings; see Lemma \ref{lemmin1} in the next section.
\end{remark}

\noindent
The  paper consists of  five sections.  The first is the present introduction.  Section \ref{S:SurToep} has a preliminary character. In this section $G$ is an arbitrary matrix-valued  $H^\iy$ function, not necessarily rational. Among others we  present the inner function $\tht$  describing the  null space of $T_G$.  In Section \ref{S:InfDim} the functions $G$ and $K$ are  again just matrix-valued $H^\iy$ functions,  not necessarily rational. We derive infinite dimensional state space formulas for the two linear fractional representations of the set of all solutions to the sub-optimal Leech equation, starting  from the abstract commutant lifting results in Section VI.6 of \cite{FFGK98}. In Section \ref{S:SSF} we prove  Theorem  \ref{T:main1}. The final section, Section \ref{App},  has   the character of an appendix; in this section we present a version of the commutant lifting theorem, based on Theorem VI.6.1 in \cite{FFGK98}.  Theorem  \ref{cltstrict2}, which follows Theorem VI.6.1 in \cite{FFGK98} but does not appear in \cite{FFGK98},  serves as the abstract basis  for the proofs of our main results.

\medskip
\paragraph{Notation and terminology.} We conclude this introduction with some notation and terminology used throughout the paper. As usual, we identify a $k\ts r$ matrix with complex entries with the linear operator from $\BC^r$ to $\BC^k$ induced  by the action of the matrix on the standard bases. For any positive integer $k$ we write $E_k $ for the canonical embedding of $\BC^k$ onto the first coordinate space of $\ell_+^2(\BC^k)$, that is,
\begin{equation}
\label{defEk}
E_k  = \begin{bmatrix}
           I_k & 0 & 0 & 0 & \cdots \,\,\\
         \end{bmatrix}{}^\top:\mathbb{C}^k \rightarrow \ell_+^2(\mathbb{C}^k).
\end{equation}
Here $\ell^2_+(\BC^k)$ denotes the Hilbert space of unilateral square summable sequences of vectors in $\BC^k$. By $S_k$ we denote  the unilateral shift on $\ell_+^2(\mathbb{C}^k)$. For positive integers $k$ and $r$ we write $H_{k\ts r}^\iy$ for the Banach space of  all $k\ts r$ matrices with entries from $H^\iy$, the algebra of all bounded analytic functions of the open unit disc $\BD$.  The  supremum norm  of   $F\in H_{k\ts r}^\iy$ is given by  $\|F\|_\iy=\sup_{|z|<1} \|F(z)\|$. By $\fR H^\iy_{k\ts r}$ we denote the space of all stable rational $k\ts r$ matrix functions which we view as  a subspace of   $H_{k\ts r}^\iy$.  The adjoint of  $F\in H_{k\ts r}^\iy$  is the co-analytic function $F^*$ which is defined by $F^*(z)=F(1/\bar{z})^*$, $|z|<1$. Finally, we write $\bigvee_{i\in I} \sM_i $ for the closure of the linear hull of the spaces $\sM_i$ ranging over the index set $I$.


\section{The model space and model operator associated with the kernel of a surjective analytic Toeplitz operator}\label{S:SurToep}

Throughout this section let $G\in H^\iy_{m\ts p}$. Then  $S_m T_G=T_G S_p $  implies $\kr T_G$ is invariant under $S_p$, and hence $\sH'=\im T_G^*=\ell^2_+(\BC^p)\ominus \kr T_G$ is invariant under $S_p^*$. By the Beurling-Lax theorem, $\sH'$ is a model space, that is, there exists an inner function $\tht\in H^\iy_{p\ts k}$, for some $k\leq p$, such that $\sH'=\ell^2_+(\BC^p)\ominus T_\tht \ell^2_+(\BC^k)$. We write $T'$ for the associated model operator $T'=P_{\sH'}S_p|_{\sH'}$.

We shall assume in addition that $T_G$ is a surjective operator, or equivalently, that $T_GT_G^*$ is an invertible operator on $\ell^2_+(\BC^m)$. In that case, we provide an explicit infinite dimensional state space representation for the inner function $\tht$, along with some formulas that will be of use in the sequel.

Note that $S_p$ is an isometric lifting of $T'$, see the appendix for the definition of a (minimal) isometric lifting. In a  second result in this section, Lemma \ref{lemmin1} below, we present a condition which is equivalent to $S_p$ being a minimal isometric lifting of $T'$.

\begin{lem}\label{L:thtids}
The inner function $\tht\in H^\iy_{p\ts k}$ with $\sH'=\ell^2_+(\BC^p)\ominus T_\tht \ell^2_+(\BC^k)$ is given by
\begin{equation}\label{thtform}
\tht(z)={\tht_0} -zE_p^*T_G^*(I-zS_m^*)^{-1}(T_GT_G^*)^{-1}N.
\end{equation}
Here  $N$ is the operator from $\BC^k$ to $\ell^2_+(\BC^m)$ given by $N=S_m^*T_GE_p{\tht_0}$, and $\tht_0=\tht(0)$ is a  one-to-one $p\ts k$ matrix  uniquely determined, up to multiplication with a constant unitary $k\ts k$ matrix from the right, by
\begin{equation}\label{tht0}
{\tht_0}{\tht_0}^*=I_p- E_p^* T_G^*(T_GT_G^*)^{-1}T_GE_p.
\end{equation}
Furthermore, $N=-T_G S_p^* T_\tht E_k$ and for any $z\in\BD$ we have
\begin{equation}\label{thtid}
\tht(z)N^*(T_GT_G^*)^{-1}=E_p^*(I-zS_p^*)^{-1} T_G^*(T_GT_G^*)^{-1} (I-zS_m^*)S_m.
\end{equation}
\end{lem}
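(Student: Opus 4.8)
The plan is to derive all the assertions from one structural identity together with the intertwining relations for the shifts.

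\emph{Step 1 (the projection identity and \eqref{tht0}).} Because $T_GT_G^*$ is invertible, the orthogonal projection of $\ell^2_+(\BC^p)$ onto $\kr T_G$ equals $I-T_G^*(T_GT_G^*)^{-1}T_G$. By Beurling--Lax, $\kr T_G=T_\tht\ell^2_+(\BC^k)$ with $T_\tht$ an isometry, so this projection also equals $T_\tht T_\tht^*$. Hence
\[
T_\tht T_\tht^*=I-T_G^*(T_GT_G^*)^{-1}T_G .
\]
Sandwiching between $E_p^*$ and $E_p$ and using $E_p^*T_\tht=\tht_0E_k^*$ (so that $E_p^*T_\tht T_\tht^*E_p=\tht_0\tht_0^*$) yields \eqref{tht0}; uniqueness of $\tht_0$ up to a constant unitary on the right is the Beurling--Lax uniqueness of $\tht$ read off at $z=0$.

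\emph{Step 2 (the realization \eqref{thtform} and the formula for $N$).} From $\tht(z)=E_p^*(I-zS_p^*)^{-1}T_\tht E_k$ and $(I-zS_p^*)^{-1}=I+z(I-zS_p^*)^{-1}S_p^*$ I get $\tht(z)=\tht_0+zE_p^*(I-zS_p^*)^{-1}v$ with $v:=S_p^*T_\tht E_k$, and the point is to identify $v$. Since $T_\tht^*S_p^*=S_k^*T_\tht^*$ and $S_k^*E_k=0$, one has $T_\tht^*v=S_k^*T_\tht^*T_\tht E_k=0$, so $v\in(\kr T_G)^\perp=\im T_G^*$. Next, from $S_m^*T_GS_p=T_G$ and $S_pS_p^*=I-E_pE_p^*$ comes the one-step intertwining with defect
\[
S_m^*T_G=T_GS_p^*+(S_m^*T_GE_p)E_p^* ,
\]
which together with $T_GT_\tht=0$ gives $T_Gv=T_GS_p^*T_\tht E_k=-(S_m^*T_GE_p)\tht_0=-N$; this already proves $N=-T_GS_p^*T_\tht E_k$. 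Because $T_G^*(T_GT_G^*)^{-1}T_Gw=w$ for all $w\in\im T_G^*$, I conclude $v=-T_G^*(T_GT_G^*)^{-1}N$, and substituting this and moving the resolvent through $T_G^*$ via $(I-zS_p^*)^{-1}T_G^*=T_G^*(I-zS_m^*)^{-1}$ yields exactly \eqref{thtform}. Injectivity of $\tht_0$ is then immediate: if $\tht_0x=0$ then $Nx=0$, so \eqref{thtform} forces $\tht(z)x\equiv0$, hence $T_\tht E_kx=0$ and $x=0$ since $T_\tht$ is an isometry.

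\emph{Step 3 (the identity \eqref{thtid}).} Writing $W=(T_GT_G^*)^{-1}$, $M=T_G^*W$ and $\Phi=T_GE_p$, I substitute $\tht(z)=E_p^*(I-zS_p^*)^{-1}T_\tht E_k$ on the left of \eqref{thtid} and compute $T_\tht E_kN^*$ in closed form. Using $N^*=-E_k^*T_\tht^*S_pT_G^*$ and
\[
T_\tht E_kE_k^*T_\tht^*=T_\tht T_\tht^*-S_pT_\tht T_\tht^*S_p^*=E_pE_p^*-MT_G+S_pMT_GS_p^* ,
\]
together with $S_p^*S_p=I$, $E_p^*S_p=0$, and the companion commutator relation $(T_GT_G^*)S_m=S_m(T_GT_G^*)+\Phi\Phi^*S_m$ (derived from the defect relation of Step 2 by multiplying with $T_G^*$ and taking adjoints), a short reduction gives $T_\tht E_kN^*W=T_G^*S_mW-M\Phi\Phi^*S_mW-S_pM$. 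On the right, $(I-zS_m^*)S_m=S_m-zI$, so the right-hand side of \eqref{thtid} equals $E_p^*(I-zS_p^*)^{-1}(MS_m-zM)$. Subtracting and using the same commutator relation to cancel every term carrying $\Phi\Phi^*$ leaves the residue $(zI-S_p)M$; since $E_p^*(I-zS_p^*)^{-1}S_p=zE_p^*(I-zS_p^*)^{-1}$, this residue is annihilated, which establishes \eqref{thtid}.

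The recurring obstacle is that $(T_GT_G^*)^{-1}$ does not commute with the shifts, so nothing can be done by naive coefficient matching; instead every step must be routed through the defect relation and its companion commutator relation, which repackage $S^*S=I$, $SS^*=I-EE^*$ and $S_mT_G=T_GS_p$. Once these two relations and the resolvent identity $E_p^*(I-zS_p^*)^{-1}S_p=zE_p^*(I-zS_p^*)^{-1}$ are in place, what remains is bookkeeping.
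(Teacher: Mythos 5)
Your proposal is correct, and Steps 1 and 2 follow essentially the same route as the paper: both rest on the projection identity $T_\tht T_\tht^*=I-T_G^*(T_GT_G^*)^{-1}T_G$, the expansion $\tht(z)=\tht_0+zE_p^*(I-zS_p^*)^{-1}S_p^*T_\tht E_k$, and the observation that $S_p^*T_\tht E_k$ lies in $\im T_G^*$ (the paper inserts $I-T_\tht T_\tht^*$ using $T_\tht^*S_p^*T_\tht E_k=0$; you say the projection acts as the identity on $\im T_G^*$ --- the same fact). Your derivations of $N=-T_GS_p^*T_\tht E_k$, of \eqref{tht0} via $E_p^*T_\tht=\tht_0E_k^*$, and of the injectivity of $\tht_0$ all match the paper's.

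Where you genuinely diverge is Step 3. The paper first upgrades the formula for $N$ to $NE_k^*=-T_GS_p^*T_\tht$ (using $T_\tht S_kS_k^*$ absorbed by $T_GT_\tht=0$), which collapses $T_\tht E_kN^*$ to $-T_\tht T_\tht^*S_pT_G^*$ in one line; a single application of the projection identity together with $T_GS_p=S_mT_G$ and the resolvent identity $E_p^*(I-zS_p^*)^{-1}S_p=zE_p^*(I-zS_p^*)^{-1}$ then gives \eqref{thtid}. You instead expand $T_\tht E_kE_k^*T_\tht^*=T_\tht T_\tht^*-S_pT_\tht T_\tht^*S_p^*$ and route the computation through the commutator relation $(T_GT_G^*)S_m=S_m(T_GT_G^*)+\Phi\Phi^*S_m$ with $\Phi=T_GE_p$. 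I checked that relation and your intermediate identity $T_\tht E_kN^*(T_GT_G^*)^{-1}=T_G^*S_m W-M\Phi\Phi^*S_mW-S_pM$ (with $W=(T_GT_G^*)^{-1}$, $M=T_G^*W$), and the cancellation does close up as you claim, so the argument is sound; it is simply longer than necessary. The moral is that the single identity $NE_k^*=-T_GS_p^*T_\tht$ does the work of your commutator bookkeeping. Finally, your appeal to Beurling--Lax for the uniqueness of $\tht_0$ up to a right unitary is fine, though the intended statement is the elementary one that an injective minimal-rank factor of a fixed positive semidefinite matrix is unique up to a right unitary factor.
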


\begin{remark}\label{rem22} Note that ${\tht_0}$ is the analog of the  left minimal rank factor  introduced in the second paragraph preceding Theorem \ref{T:main1}. In the rational case $k=p-m$; see Lemma 2.2 in \cite{FKR11}. However, it can be shown that the latter equality  holds in general; see \cite[Section 2]{GtHKprep}.
\end{remark}

\begin{proof}[\bf Proof of Lemma \ref{L:thtids}.]
We first show that $N=-T_G S_p T_\tht E_k$ holds. Using the fact that $T_G T_\Theta=0$ and ${\tht_0}=E_p^* T_\Theta E_k$ we obtain that
\begin{align*}
N
&=S_m^*T_GE_p{\tht_0}
=S_m^*T_GE_pE_p^* T_\Theta E_k
=S_m^*T_G(I- S_pS_p^*)T_\Theta E_k\\
&=-S_m^*T_G S_pS_p^* T_\Theta E_k
=-S_m^*S_m T_G S_p^* T_\Theta E_k
=-T_G S_p^* T_\Theta E_k,
\end{align*}
as claimed.

Since $T_G$ is surjective, $T_G^*(T_GT_G^*)^{-1}T_G$ is the orthogonal projection onto  $\im T_G^*$, so that
\begin{equation}\label{OrthProject}
 T_\tht T_\tht^*=P_{\kr T_G}=I-P_{\im T_G^*}
=I-T_G^*(T_GT_G^*)^{-1}T_G.
\end{equation}
Next observe that
\[
T_{\tht}^* S_p^*T_\tht E_k= S_k^* T_\tht^*T_\tht E_k=S_k^*E_k=0.
\]
Together with the formula for $N$ we then obtain for each $z\in\BD$ that
\begin{align*}
\tht(z)
&=E_p^*(I-zS_p^*)^{-1}T_\tht E_k\\
&=E_p^* T_\Theta E_k+zE_p^*(I-zS_p^*)^{-1}S_p^*T_\tht E_k\\
&={\tht_0}+zE_p^*(I-zS_p^*)^{-1}(I- T_\tht T_\tht^*) S_p^*T_\tht E_k\\
&={\tht_0}+zE_p^*(I-zS_p^*)^{-1}T_G^*(T_GT_G^*)^{-1}T_G S_p^*T_\tht E_k\\
&= {\tht_0} - zE_p^*(I-zS_p^*)^{-1}T_G^*(T_GT_G^*)^{-1}N.
\end{align*}
This yields the desired state space representation  \eqref{thtform}  for $\tht$.

Note that
\[
\kr {\tht_0}\subset \kr S_m^*T_GE_p{\tht_0} = \kr N.
\]
Thus, for $u\in \kr {\tht_0}$, we have $\tht(z)u=0$ for all $z\in\BD$, and hence also for a.e.\ $z\in\BT$. Since $\tht$ is inner, this implies $u=0$. Hence $\kr {\tht_0}=\{0\}$.

Furthermore, since $E_p^* T_\tht S_k=E_p^* S_p T_\tht=0$, we have
\begin{align*}
E_p^* T_\tht T_\tht^* E_p
&=E_p^* T_\tht (E_kE_k^*+S_kS_k^*) T_\tht^* E_p
=E_p^* T_\tht E_kE_k^* T_\tht^* E_p={\tht_0}{\tht_0}^*.
\end{align*}
Along with \eqref{OrthProject}, this yields
\begin{align*}
{\tht_0}{\tht_0}^*=E_p^* (I-T_G^*(T_GT_G^*)^{-1}T_G) E_p=I_p- E_p^*T_G^*(T_GT_G^*)^{-1}T_GE_p.
\end{align*}

Again using $T_GT_\tht=0$ and $N=-T_G S_p^*T_\tht E_k$, we obtain that
\begin{align*}
N E_k^*
&=-T_G S_p^*T_\tht E_k E_k^*=-T_G S_p^*T_\tht(I-S_kS_k^*)\\
&=-T_G S_p^*T_\tht+T_G S_p^*T_\tht S_kS_k^*
=-T_G S_p^*T_\tht+T_G T_\tht S_k^*=-T_G S_p^*T_\tht.
\end{align*}
Fix $z\in \BD$. Then we find
\begin{align*}
\tht(z)N^*
&=E_p^*(I-z S_p^*)^{-1}T_\tht E_kN^*=-E_p^*(I-z S_p^*)^{-1}T_\tht T_\tht^*S_p T_G^*.
\end{align*}
Using \eqref{OrthProject}, yields
\begin{align*}
T_\tht T_\tht^*S_p T_G^* (T_GT_G^*)^{-1}
&=S_p T_G^*(T_GT_G^*)^{-1} -T_G^*(T_GT_G^*)^{-1}T_G S_p T_G^* (T_GT_G^*)^{-1}\\
&=S_p T_G^*(T_GT_G^*)^{-1} -T_G^*(T_GT_G^*)^{-1}S_m.
\end{align*}
Combining this with the formula for $\tht(z)N^*$ gives
\begin{align*}
\tht(z)N^*(T_GT_G^*)^{-1}
&=E_p^*(I-z S_p^*)^{-1}(T_G^*(T_GT_G^*)^{-1}S_m-S_p T_G^*(T_GT_G^*)^{-1})\\
&=E_p^*(I-z S_p^*)^{-1}\times\\
&\qquad\times\big(T_G^*(T_GT_G^*)^{-1}S_m-z T_G^*(T_GT_G^*)^{-1}S_m^* S_m\big)\\
&=E_p^*(I-z S_p^*)^{-1}T_G^*(T_GT_G^*)^{-1}(I-zS_m^*)S_m.
\end{align*}
Hence the identity \eqref{thtid} holds.
\end{proof}

We now proceed with the second result of this section.

\begin{lem}\label{lemmin1}
The shift $S_p$ is a minimal isometric lifting of  $T'=P_{\sH'}S_p|_{\sH'}$  if and only there is no non-zero $x\in \BC^p$ such that $G(z)x$ vanishes identically,  that is, ${\cap}_{z\in\BD}\kr G(z)=\{0\}$.
\end{lem}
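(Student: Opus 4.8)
The plan is to recall the definition of minimal isometric lifting and translate the minimality condition into a reachability statement, then connect reachability to the condition $\cap_{z\in\BD}\kr G(z)=\{0\}$. Recall that an isometric lifting $U$ on $\sK$ of a contraction $T'$ on $\sH'$ (with $\sH'\subseteq\sK$) is \emph{minimal} precisely when $\sK=\bigvee_{n\geq 0}U^n\sH'$. In our setting $U=S_p$ and $\sK=\ell^2_+(\BC^p)$, while $\sH'=\im T_G^*$. So minimality is equivalent to the statement
\[
\bigvee_{n\geq 0}S_p^n\, \im T_G^*=\ell^2_+(\BC^p).
\]
First I would rewrite the left-hand side and identify its orthogonal complement. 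A vector $f\in\ell^2_+(\BC^p)$ is orthogonal to $\bigvee_{n\geq 0}S_p^n\,\im T_G^*$ exactly when $(S_p^*)^n f\perp\im T_G^*$ for all $n\geq 0$, i.e.\ when $(S_p^*)^n f\in\kr T_G$ for every $n\geq 0$. Thus minimality fails if and only if there is a nonzero $f\in\ell^2_+(\BC^p)$ whose entire backward orbit under $S_p^*$ lies in $\kr T_G$.

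Next I would show that the existence of such an $f$ is equivalent to the existence of a nonzero constant vector $x\in\BC^p$ with $G(z)x\equiv 0$. One direction is immediate: if $G(\cdot)x\equiv 0$ for some $x\neq 0$, take $f=E_p x$, the constant sequence supported in the first coordinate; then $(S_p^*)^n f=0$ for $n\geq 1$ and $(S_p^*)^0 f=E_p x\in\kr T_G$ because $T_G E_p x$ corresponds to multiplication of $G$ by the constant $x$, which vanishes. For the converse, suppose $f\neq 0$ has all backward shifts in $\kr T_G$. I would use the fact that $\kr T_G$ is $S_p$-invariant and that $S_p$ restricted to the subspace $\sN=\bigcap_{n\geq 0}(S_p^*)^{-n}\kr T_G$ is a backward-shift-invariant situation: the point is to locate a nonzero constant in the $S_p^*$-orbit closure. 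Concretely, writing $f=\sum_{j\geq 0}S_p^j E_p x_j$ with $x_j\in\BC^p$, the condition $(S_p^*)^n f\in\kr T_G$ for all $n$ forces, upon reading off the Fourier/Taylor coefficients, that the analytic function $\hat f(z)=\sum_j x_j z^j$ satisfies $G(z)\hat f(z)=0$ for all $z\in\BD$ (since $T_G$ acting on the shifted pieces encodes pointwise multiplication by $G$); by analyticity and the nonvanishing of $\hat f$ on a set of positive measure, together with the fact that $G$ is a fixed $H^\infty$ matrix function, one extracts a genuine constant direction $x\in\BC^p$ with $G(z)x\equiv 0$. The cleanest route is to observe that $\bigcap_{z\in\BD}\kr G(z)$ is precisely the largest constant subspace $\sL\subseteq\BC^p$ with $T_G E_p|_{\sL}=0$ and $S_p E_p\sL\subseteq\kr T_G$, and to show the invariant subspace $\sN$ above is nonzero exactly when this constant subspace is nonzero.

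The main obstacle I expect is the converse direction: passing from an abstract nonzero $f$ with $\{(S_p^*)^nf\}_{n\geq0}\subseteq\kr T_G$ to an honest \emph{constant} vector $x$ annihilated by all $G(z)$. The subtlety is that $\kr T_G$ is only $S_p$-invariant (forward), not $S_p^*$-invariant, so the backward orbit condition is a genuine constraint rather than automatic, and one must argue that the subspace $\sN=\bigcap_{n\geq0}(S_p^*)^{-n}\kr T_G$ (which is both $S_p$- and $S_p^*$-reducing within $\kr T_G$) consists of constant sequences. Here I would invoke that $S_p$ is a shift and that a subspace reducing the bilateral-shift behaviour on $\ell^2_+$ and contained in $\kr T_G$ must be spanned by wandering vectors; the nonvanishing of an inner $\tht$ (from Lemma~\ref{L:thtids}) and the relation $\kr T_G=T_\tht\ell^2_+(\BC^k)$ give an alternative handle, reducing the question to whether $\tht$ has a nonconstant common zero structure. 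I would finish by identifying $\sN\cong\cap_{z\in\BD}\kr G(z)$ and concluding that $\sN\neq\{0\}$ if and only if there is a nonzero $x$ with $G(\cdot)x\equiv0$, which is exactly the negation of the asserted minimality.
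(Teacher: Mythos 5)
Your overall strategy is the same as the paper's: minimality of the lifting is equivalent to $\bigvee_{n\geq 0}S_p^n\,\im T_G^*=\ell^2_+(\BC^p)$, and a vector $f$ lies in the orthogonal complement of the left-hand side exactly when $(S_p^*)^nf\in\kr T_G$ for all $n\geq 0$. The forward direction (a nonzero $x$ with $G(\cdot)x\equiv 0$ gives $f=E_px$ in that complement) is fine. The problem is in the converse, where your concrete argument has a genuine gap: from the single relation $G(z)\hat f(z)\equiv 0$ with $\hat f\not\equiv 0$ you claim to ``extract a genuine constant direction $x$ with $G(z)x\equiv 0$'' by analyticity. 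That implication is false in general: take $G(z)=\begin{bmatrix}1&-z\end{bmatrix}$ and $\hat f(z)=\begin{bmatrix}z&1\end{bmatrix}^\top$; then $G\hat f\equiv 0$ but no nonzero constant $x$ satisfies $G(z)x\equiv 0$. The relation $G\hat f\equiv 0$ is only the $n=0$ instance of your hypothesis, i.e.\ $f\in\kr T_G$; it does not use the backward orbit condition at all, and by itself it cannot yield the conclusion.

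The repair is to use all the conditions $(S_p^*)^nf\in\kr T_G$ simultaneously, which is exactly what the paper does by writing out the block Toeplitz ``staircase.'' In function terms: setting $\hat f_n(z)=\sum_{j\geq n}x_jz^{j-n}$, the hypothesis says $G\hat f_n\equiv 0$ for every $n$, and since $\hat f_n(z)=x_n+z\hat f_{n+1}(z)$, subtracting gives
\begin{equation*}
G(z)x_n=G(z)\hat f_n(z)-zG(z)\hat f_{n+1}(z)\equiv 0
\end{equation*}
for every Taylor coefficient $x_n$ of $\hat f$; since some $x_n\neq 0$, this produces the required constant vector. Your alternative sketch via the subspace $\sN=\bigcap_{n\geq 0}(S_p^*)^{-n}\kr T_G$ can also be made to work — $\sN$ reduces $S_p$, and every reducing subspace of the unilateral shift of multiplicity $p$ has the form $\ell^2_+(\sL)$ for a subspace $\sL\subseteq\BC^p$, whence $\sN\neq\{0\}$ forces $E_p\sL\subseteq\kr T_G$, i.e.\ $G(z)x\equiv 0$ on $\sL$ — but as written you only gesture at ``wandering vectors'' and the inner function $\tht$ without carrying this out, so the converse direction is not yet a proof.
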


\begin{proof}[\bf Proof.]
Put
\[
\sX=\bigvee_{\nu\geq 0} S_p^\nu \sH', \quad \sX_0=\sX\ominus \sH', \quad \sX_1=\ell^2_+(\BC^p)\ominus \sX.
\]
Since $\sX$ is invariant under both $S_p$ and $S_p^*$, the same holds true for $\sX_1$. Hence $S_p$  partitions as
\begin{equation}\label{lifting2a}
S_p=\begin{bmatrix}
   T'   &    0&0\\
     W_0 &  Z_0&0\\\
     0&0&Z_1
\end{bmatrix}\mbox{ on } \begin{bmatrix}
  \sH'\\
  \sX_0\\\sX_1
\end{bmatrix}
\end{equation}
and the isometry
\begin{equation}\label{lifting2b}
U_0'=\begin{bmatrix}
   T'   &    0\\
   W_0 &  Z_0
\end{bmatrix}\mbox{ on } \begin{bmatrix}
  \sH'\\
\sX_0
\end{bmatrix}
\end{equation}
is a  minimal isometric lifting of $T'$.
In particular, the shift $S_p$ is a minimal isometric lifting
of $T'$ if and only $\sX_1$ consists of the zero element only.

Now take
$h=(h_0,h_1,\ldots)\in\ell^2_+(\BC^p)$. Then $h\in \sX_1$ if
and only if $h\perp S^\nu \im T_G^*$ for $\nu=0, 1,2, \ldots$.
In other words
\begin{align*}
 h\in \sX_1& \ \Longleftrightarrow \ T_G(S_p^*)^\nu h=0, \quad \nu=0, 1,2, \ldots\\
& \  \Longleftrightarrow \
 \begin{bmatrix}
G_0  &  0  &0&\cdots\\
G_1 & G_0&0&\cdots\\
 G_2&G_1&G_0&\\
 \vdots&\vdots&&\ddots
\end{bmatrix}
 \begin{bmatrix}h_\nu\\ h_{\nu+1}\\h_{\nu+2}\\ \vdots  \end{bmatrix}= 0, \quad \nu=0, 1,2, \ldots\\
 & \  \Longleftrightarrow \
 \begin{bmatrix}
G_0 \\
G_1\\
 G_2\\
 \vdots
\end{bmatrix}h_\nu,  \quad \nu=0, 1,2, \ldots\\
& \  \Longleftrightarrow \  G(z)h_\nu \equiv 0,   \quad \nu=0, 1,2, \ldots.
\end{align*}
We conclude that $\sX_1$ contains a non-zero element if and only if there exists a non-zero $x\in \BC^p$ such that $G(z)x$ vanishes identically.
\end{proof}


\section{Infinite dimensional  state space formulas   for the coefficients}\label{S:InfDim}

In this section $G\in H^\iy_{m\ts p}$ and $K\in H^\iy_{m\ts q}$,  and we assume that  $T_GT_G^*-T_KT_K^*$ is strictly positive. We do not require $G$ and $K$ to be  rational matrix functions.  Our aim is to describe all solutions to the Leech problem  \eqref{Leech1}.

Note that $T_GT_G^*-T_KT_K^*$ strictly positive  implies that $T_GT_G^*$ is strictly positive, and thus that $T_G$ is a surjective analytic Toeplitz operator. Hence the results of Section \ref{S:SurToep} apply. In particular, $\sH'=\im T_G^*$ is a model space and the associated inner function $\tht$ is given by \eqref{thtform}. As before, we write $T'$ for the model operator $T'=P_{\sH'} S_p |_{\sH'}$.

Next we recall some results from \cite{FtHK-IEOT}. Set $\la=T_G^*(T_GT_G^*)^{-1}T_K$, viewed as an operator mapping $\ell^2_+(\BC^q)$ into $\sH'$. According to Lemma 2.3 in \cite{FtHK-IEOT}, the operator $\la$ is a strict contraction which satisfies
\[
T'\la=\la S_q.
\]
These two facts make it possible to apply commutant lifting theory. Following the argumentation in the last paragraph of Section 2 from \cite{FtHK-IEOT}, the contractive liftings of $\la$ that  intertwine $S_p$ and $S_q$ are precisely the Toeplitz operators defined by the solutions $X$ to the Leech problem associated with $G$ and $K$. Hence, the solutions are described in the appendix by Theorem \ref{cltstrict} as well as by Theorem \ref{cltstrict2}, specified to the special choice of $\la$ made here. {Note that this require $S_p$ to be a minimal isometric lifting of $T'$. Therefore (cf., Lemma \ref{lemmin1}) in what follows we shall assume that $\cap_{z\in\BD}\kr G(z)=\{0\}$.}

The following theorem is based on Theorem \ref{cltstrict2} specified for the case when the strict contraction $\la$ is given by $\la=T_G^*(T_GT_G^*)^{-1} {T_K}$.  Its prove require a number of non-trivial operator manipulations.

\begin{thm}\label{Allsol-LF}
Let $G\in H^\iy_{m\ts p}$ and $K\in H^\iy_{m\ts q}$ be such that $T_GT_G^*-T_KT_K^*$ is strictly positive, and assume that there is  no non-zero $x\in \BC^p$ such that $G(z)x$ is identically zero on the open unit disc $\BD$. Then the set of all solutions to the Leech problem \eqref{Leech1} associated with $G$ and $K$ is given by the range of the linear fractional map
\begin{equation}\label{LFTSol3}
X(z)=\big(\Up_{12}(z)+\Up_{11}(z)Y(z)\big) \big(\Up_{22}(z)+\Up_{21}(z)Y(z)\big)^{-1}, \quad |z|<1.
\end{equation}
Here $Y$ is an arbitrary function in $H^\iy_{k\ts q}$ with $\|Y\|_\infty\leq 1$, and
\begin{align}
\Up_{11}(z)&=\tht_0\de_1^{-1}-zE_p^*T_G^*(I-zS_m^*)^{-1}(T_GT_G^*-T_KT_K^*)^{-1}N\de_1^{-1}, \label{up11}\\
\Up_{21}(z)&=-zE_q^*T_K^*(I-zS_m^*)^{-1}(T_GT_G^*-T_KT_K^*)^{-1} N\de_1^{-1},\label{up21} \\
\Up_{12}(z)&= E_p^* T_G^*  (T_GT_G^* - T_KT_K^*)^{-1}T_KE_q \de_0^{-1} +\nn\\
&\hspace{.8cm}+  zE_p^* T_G^*(I - z S_m^*)^{-1}S_m^*(T_GT_G^* - T_KT_K^*)^{-1}T_KE_q \de_0^{-1},\label{up12} \\
\Up_{22}(z)&=\de_0 +zE_q^*T_K^*(I-zS_m^*)^{-1}S_m^*\ts \nn\\
&\hspace{3cm}\ts  (T_GT_G^*-T_KT_K^*)^{-1}T_KE_q\Delta_0^{-1}. \label{up22}
\end{align}
Here $\tht_0$   is a  one-to-one $p\ts k$ matrix  uniquely determined, up to multiplication with a constant unitary $k\ts k$ matrix from the right, by the identity \eqref{tht0}, and  $N=S_m^*T_GE_p\tht_0$, as in  Lemma \ref{L:thtids}. Furthermore, $\de_0$ and $\de_1$ are the positive definite matrices defined by

\begin{align}
\Delta_0^2 &=I_q+E_q^*T_K^*(T_GT_G^*-T_KT_K^*)^{-1}T_KE_q,\label{3del0}\\
\Delta_1^2 &=I_k+N^*\Big((T_GT_G^*-T_KT_K^*)^{-1}- (T_GT_G^*)^{-1}\Big)^{-1}N. \label{3del1}
\end{align}
\end{thm}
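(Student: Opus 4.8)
The plan is to obtain Theorem~\ref{Allsol-LF} by specializing the abstract commutant lifting parametrization of Theorem~\ref{cltstrict2} to the concrete data at hand and then rewriting its coefficients as Toeplitz expressions. As recalled above, $\la=T_G^*(T_GT_G^*)^{-1}T_K$ is, by Lemma~2.3 in \cite{FtHK-IEOT}, a strict contraction from $\ell^2_+(\BC^q)$ into $\sH'=\im T_G^*$ with $T'\la=\la S_q$, and since $T_G$ is surjective one checks that a contractive lifting $B$ of $\la$ intertwining $S_p$ and $S_q$ satisfies $P_{\sH'}B=\la$ precisely when $B=T_X$ for a Leech solution $X$ (the intertwining forces $B=T_X$, and $P_{\sH'}T_X=\la$ unwinds to $GX=K$). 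Because $\cap_{z\in\BD}\kr G(z)=\{0\}$, Lemma~\ref{lemmin1} guarantees that $S_p$ is a \emph{minimal} isometric lifting of $T'$, which is exactly the hypothesis Theorem~\ref{cltstrict2} needs. Hence the whole solution set is the range of the linear fractional map whose coefficient matrix is delivered by Theorem~\ref{cltstrict2}, and what remains is to bring its four entries into the forms \eqref{up11}--\eqref{up22}.

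First I would compute the two defect quantities that govern the map. A direct calculation gives $I-\la^*\la=I-T_K^*(T_GT_G^*)^{-1}T_K$, and applying the Sherman--Morrison--Woodbury identity to $T_GT_G^*-T_KT_K^*$, with $T_GT_G^*$ invertible, expresses $(T_GT_G^*-T_KT_K^*)^{-1}$ through $(T_GT_G^*)^{-1}$ and $(I-\la^*\la)^{-1}$. This is the mechanism by which the abstract defect operators are replaced throughout by the single resolvent $(T_GT_G^*-T_KT_K^*)^{-1}$, and, after compressing to the first coordinate spaces via $E_q$ and via $N$, it is what produces the scalar normalizers $\de_0$ and $\de_1$ of \eqref{3del0}--\eqref{3del1}.

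For the second column $(\Up_{12},\Up_{22})$ I would lean on the companion paper: these entries, up to the factor $\de_0$, are the numerator and denominator of the central (maximum entropy) lifting, which is the value of the map at the parameter $Y=0$ and whose formulas are available from \cite{FtHK-IEOT}. The genuinely new work is the first column $(\Up_{11},\Up_{21})$, which encodes the free direction attached to the defect space of $\la$ sitting inside the model space $\sH'$. Here Lemma~\ref{L:thtids} is the main tool: the value $\tht_0$ enters through \eqref{tht0}, the operator $N=S_m^*T_GE_p\tht_0$ generates the defect space, and the state space representation \eqref{thtform} of the inner function $\tht$ supplies the resolvent factor $(I-zS_m^*)^{-1}$ appearing in \eqref{up11}--\eqref{up21}.

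The main obstacle is exactly this last identification: showing that the abstract first-column coefficient, presented by Theorem~\ref{cltstrict2} in terms of $\la$, its defect operator, and the geometry of the minimal isometric lifting $S_p$, collapses to the explicit form $\tht_0\de_1^{-1}-zE_p^*T_G^*(I-zS_m^*)^{-1}(T_GT_G^*-T_KT_K^*)^{-1}N\de_1^{-1}$ together with its $\Up_{21}$ analogue. The non-trivial ingredient is the intertwining identity \eqref{thtid} of Lemma~\ref{L:thtids}, which lets one pass from the abstract action of the lifting on the defect space to multiplication by $\tht(z)$ and then to the stated resolvent expression; combined with the Woodbury manipulation above it turns the abstract defect normalization into $\de_1$. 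I expect these operator manipulations, rather than any conceptual step, to make up the bulk of the proof, exactly as the sentence preceding the theorem warns.
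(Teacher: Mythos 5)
Your proposal follows essentially the same route as the paper: specialize Theorem~\ref{cltstrict2} to $\la=T_G^*(T_GT_G^*)^{-1}T_K$ (with minimality of $S_p$ supplied by Lemma~\ref{lemmin1}), import $U$ and $V$ from \cite{FtHK-IEOT} for the second column, and handle the first column and the normalizers $\de_0,\de_1$ via Lemma~\ref{L:thtids} --- in particular $N$, the realization \eqref{thtform} and the identity \eqref{thtid} --- together with the Woodbury-type identity linking $(I-\la^*\la)^{-1}$ to $(T_GT_G^*-T_KT_K^*)^{-1}$, which is exactly how the paper obtains $B_\nabla=-T_K^*(T_GT_G^*-T_KT_K^*)^{-1}N$ and the formulas \eqref{3del0}--\eqref{3del1}. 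The only part left implicit is the execution of the telescoping cancellations in the computation of $\Up_{11}$ (the paper's Part 3, where $B(z)+C_1(z)+C_{22}(z)=0$), but the tools you name are precisely the ones that make that computation go through.
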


Before we proof the above theorem we recall two useful identities from \cite[Lemma 3.2]{FtHK-IEOT}:
\begin{align}
(I - \la^* \la)^{-1} &= I + T_K^* (T_GT_G^* - T_KT_K^*)^{-1}T_K, \label{FtHK7-1}\\
\la(I-\la^*\la)^{-1} &=  T_G^*(T_GT_G^* - T_KT_K^*)^{-1}T_K.  \label{FtHK7-2}
\end{align}

\begin{proof}[\bf Proof]
We split the proof into three parts. In the first part we  derive the identities  \eqref{up12} and \eqref{up22} using  formulas (3.7) and (3.8)  in \cite[Section 3]{FtHK-IEOT}. The  final two parts  contain the proofs of the formulas for  $\Up_{11}$ and $\Up_{21}$.

\smallskip
\paragraph{Part 1.}
From Theorem \ref{cltstrict2} we know that
\[
\Up_{12}(z)=U(z)\de_0^{-1} \ands  \Up_{22}(z)=U(z)\de_0^{-1},
\]
where $U$ and $V$ are given by \eqref{defU} and \eqref{defV}, respectively. From  formulas (3.7) and (3.8)  in \cite[Section 3]{FtHK-IEOT} we know that for our choice  of $\la$ the formulas \eqref{defU} and \eqref{defV} lead to the following identities:
\begin{align}
U(z)&= E_p^* T_G^*  (T_GT_G^* - T_KT_K^*)^{-1}T_KE_q  +\nn \\
&\hspace{.8cm}+  zE_p^* T_G^*(I - z S_m^*)^{-1}S_m^*(T_GT_G^* - T_KT_K^*)^{-1}T_KE_q, \label{3U}\\
V(z)&=I_q +E_q^* T_K^*  (T_GT_G^* - T_KT_K^*)^{-1}T_KE_q+\nn\\
&\hspace{.8cm}+zE_q^*T_K^*(I-zS_m^*)^{-1}S_m^*(T_GT_G^*-T_KT_K^*)^{-1}T_KE_q. \label{3V}
\end{align}
Furthermore, according  \eqref{FtHK7-1},   for our choice of $\la$ the matrix $\de_0^2$  is given by
\begin{align*}
\de_0^2&= E_q\Big(I + T_K^* (T_GT_G^* - T_KT_K^*)^{-1}T_K\Big)E_q\\
&=I_q+ E_q^*T_K^* (T_GT_G^* - T_KT_K^*)^{-1}T_KE_q.
\end{align*}
Hence $\de_0$ is the positive definite matrix  determined by \eqref{3del0}. Also note that $V(0)=\de_0^2$. But then  multiplying  \eqref{3U}  and  \eqref{3V} from the right by  $\de_0^{-1}$  we see that  $\Up_{12}$ and $\Up_{22}$ are given by \eqref{up12} and \eqref{up22}, respectively.

\smallskip
\paragraph{Part 2.}
In this part we derive the formula for $\Up_{21}$.  Recall from Theorem \ref{cltstrict2} that  $\Up_{21}(z)=zE_q^*(I-zS_q^*)^{-1}B_\nabla \de_1^{-1}$.  Using the adjoint of  \eqref{FtHK7-2} and  the operator $N$ introduced in Lemma \ref{L:thtids} we see that for our choice of $\la$, we have
\begin{align}
B_\nabla &= (I-\la^*\la)^{-1}\la^*S_p^*T_\tht E_k \nn\\
&=T_K^*(T_GT_G^*-T_KT_K^*)^{-1}T_G S_p^*T_\tht E_k \nn\\
&=-T_K^*(T_GT_G^*-T_KT_K^*)^{-1}N.\label{3Bnabla3}
\end{align}
Since $S_q^* T_K^*= T_K^* S_m^*$, it follows that
\begin{align*}
\Up_{21}(z)&=-zE_q^*(I-zS_q^*)^{-1}T_K^*(T_GT_G^*-T_KT_K^*)^{-1}N \de_1^{-1}\\
&=-zE_q^*T_K^*(I-zS_m^*)^{-1}(T_GT_G^*-T_KT_K^*)^{-1}N \de_1^{-1}.
\end{align*}
This proves \eqref{up21}.  It remains to show that $\de_1$ is determined by  \eqref{3del1}.

Using the definition of $\de_1^2$ in  \eqref{Deltas2}, our choice of $\la$   and the operator $N$ introduced in Lemma \ref{L:thtids}  we obtain
\begin{align}
\de_1^2&= I_k+E_k^* T_\tht^* S_p \la (I-\la^* \la)^{-1}\la^* S_p^* T_\tht E_k \nn\\
&= I_k+E_k^* T_\tht^* S_p T_G^*(T_GT_G^*)^{-1} T_K(I-\la^*\la)^{-1} T_K^*(T_GT_G^*)^{-1}\ts \nn\\
&\hspace{7cm}\ts  T_GS_p^*T_\tht E_k\nn\\
&= I_k+N^*(T_GT_G^*)^{-1}T_K(I-\la^*\la)^{-1} T_K^*(T_GT_G^*)^{-1}N. \label{tode1}
\end{align}
To complete the proof of  \eqref{3del1} it remains to show that
\begin{align}
&(T_GT_G^*)^{-1}T_K(I-\la^*\la)^{-1} T_K^*(T_GT_G^*)^{-1}=\nn\\
&\hspace{4cm}=(T_GT_G^*-T_KT_K^*)^{-1}- (T_GT_G^*)^{-1}. \label{3basicid}
\end{align}
This will be done in a few steps. We first show that for our choice of $\la$ we have
\begin{equation}
\label{3auxid}
T_K(I-\la^*\la)^{-1} T_K^*=T_GT_G^*(T_GT_G^*-T_KT_K^*)^{-1}T_KT_K^*.
\end{equation}
To see this note that
\[
T_K\Big(I-T_K^* (T_GT_G^*)^{-1}T_K\Big)=\Big(I-T_K T_K^*(T_GT_G^*)^{-1}\Big)T_K,
\]
and hence
\begin{align*}
T_K\Big(I-T_K^* (T_GT_G^*)^{-1}T_K\Big)^{-1}&=\Big(I-T_K T_K^*(T_GT_G^*)^{-1}\Big)^{-1}T_K\\
&=T_GT_G^*\Big(T_GT_G^*-T_K T_K^*\Big)^{-1}T_K.
\end{align*}
Thus, again using our choice of $\la$, we see that
\begin{align*}
T_K(I-\la^*\la)^{-1} T_K^*&=T_K\Big(I-T_K^*(T_GT_G^*)^{-1}T_G T_G^*(T_GT_G^*)^{-1} T_K\Big)^{-1}T_K^*\\
&=T_K\Big(I-T_K^*(T_GT_G^*)^{-1}T_K\Big)^{-1}T_K^*\\
&=T_GT_G^*\Big(T_GT_G^*-T_K T_K^*\Big)^{-1}T_KT_K^*,
\end{align*}
which proves \eqref{3auxid}. But then
\begin{align*}
&(T_GT_G^*)^{-1}T_K(I-\la^*\la)^{-1} T_K^*(T_GT_G^*)^{-1}=\\
&\hspace{1cm}=(T_GT_G^*)^{-1}\Big[T_GT_G^*\Big(T_GT_G^*-T_K T_K^*\Big)^{-1}T_KT_K^*\Big]     (T_GT_G^*)^{-1}\\
&\hspace{1cm}= \Big(T_GT_G^*-T_K T_K^*\Big)^{-1}T_KT_K^*      (T_GT_G^*)^{-1}\\
&\hspace{1cm}=\Big(T_GT_G^*-T_K T_K^*\Big)^{-1}\Big(T_GT_G^*-(T_GT_G^*-T_K T_K^*)\Big) (T_GT_G^*)^{-1}\\
&\hspace{1cm}= (T_GT_G^*-T_K T_K^* )^{-1}- (T_GT_G^*)^{-1}.
\end{align*}
This proves \eqref{3basicid}. Using the identity \eqref{3basicid} in \eqref{tode1} yields   \eqref{3del1}.

\smallskip
\paragraph{Part 3.}
In this part we derive the formula for $\Up_{11}$.  Using our choice of  $\la$,  the formula for $\tht$ given by \eqref{thtform}, and the first identity in \eqref{LFUps} we see that
\[
\Up_{11}(z)-{\tht_0}\de_1^{-1}= A(z) +B(z)+ C(z),
\]
where
\begin{align*}
A(z)&=-zE_p^*T_G^*(I-zS_m^*)^{-1}(T_GT_G^*)^{-1}N\de_1^{-1},\\
B(z)&=zE_p^*(I-zS_p^*)^{-1}T_G^*(T_GT_G^*)^{-1}T_K E_qE_q^*(I-zS_q^*)^{-1}B_\nabla\de_1^{-1},\\
C(z)&=- z\tht(z)E_k^*T_\tht^*S_pT_G^*(T_GT_G^*)^{-1}T_K(I-zS_q^*)^{-1}S_q^* B_\nabla\de_1^{-1}.
\end{align*}
Here $B_\nabla=-T_K^*(T_GT_G^*-T_KT_K^*)^{-1}N$,  with $N=-T_GS_p^*T_\tht E_k$ as in Lemma~\ref{L:thtids}, and  $\de_1$ is the positive definite matrix determined by  \eqref{3del1}.

First we deal with $C(z)$. Using the formula for $N$ and the identity  \eqref{thtid} we see that
\begin{align*}
C(z)&=z\tht(z)N^*(T_GT_G^*)^{-1}T_K(I-zS_q^*)^{-1}S_q^*B_\nabla\de_1^{-1}\\
&=zE_p^*(I-zS_p^*)^{-1}T_G^*(T_GT_G^*)^{-1}(I-zS_m^*)S_m\ts\\
&\hspace{5cm}\ts T_K(I-zS_q^*)^{-1}S_q^*B_\nabla\de_1^{-1}.
\end{align*}
Note that $(I-zS_m^*)S_m=S_m-zI$, and hence $C(z)=C_1(z)+C_2(z)$, where
\begin{align*}
C_1(z)&= zE_p^*(I-zS_p^*)^{-1}T_G^*(T_GT_G^*)^{-1}S_mT_K (I-zS_q^*)^{-1}S_q^*B_\nabla\de_1^{-1},\\
C_2(z)&=-z^2 E_p^*(I-zS_p^*)^{-1}T_G^*(T_GT_G^*)^{-1}T_K (I-zS_q^*)^{-1}S_q^*B_\nabla\de_1^{-1}.
\end{align*}
Next we use the intertwining relation  $T_KS_q=S_mT_K$ and the identity
\[
S_q(I-zS_q^*)^{-1}S_q^*= S_qS_q^*(I-zS_q^*)^{-1}.
\]
This yields
\[
C_1(z)=zE_p^*(I-zS_p^*)^{-1}T_G^*(T_GT_G^*)^{-1}T_KS_qS_q^*(I-zS_q^*)^{-1}B_\nabla\de_1^{-1},
\]
and hence, using $E_qE_q^*+S_qS_q^*=I$, we obtain
\[
B(z)+C_1(z)=zE_p^*(I-zS_p^*)^{-1}T_G^*(T_GT_G^*)^{-1}T_K(I-zS_q^*)^{-1}B_\nabla\de_1^{-1}.
\]
Next observe that
\begin{align*}
C_2(z)&=z E_p^*(I-zS_p^*)^{-1}T_G^*(T_GT_G^*)^{-1}T_K (I-zS_q^*)^{-1}(-zS_q^*)B_\nabla\de_1^{-1}\\
&=zE_p^*(I-zS_p^*)^{-1}T_G^*(T_GT_G^*)^{-1}T_K (I-zS_q^*)^{-1}\ts\\
&\hspace{5cm}\ts \Big((I-zS_q^*)-I\Big)B_\nabla\de_1^{-1}\\
&=C_{21}(z)+C_{22}(z),
\end{align*}
where
\begin{align*}
C_{21}(z)&=zE_p^*(I-zS_p^*)^{-1}T_G^*(T_GT_G^*)^{-1}T_K B_\nabla\de_1^{-1},\\
C_{22}(z)&=-zE_p^*(I-zS_p^*)^{-1}T_G^*(T_GT_G^*)^{-1}T_K (I-zS_q^*)^{-1}B_\nabla\de_1^{-1}.
\end{align*}
We conclude  that $B(z)+C_1(z)+C_{22}(z)=0$, and hence
\[
\Up_{11}(z)-{\tht_0}\de_1^{-1}= A(z) +C_{21}(z).
\]

Next, using the intertwining relation $S_mT_G=T_G S_p$ and the formula for $B_\nabla$ given by   \eqref{3Bnabla3} we see that
\[
C_{21}(z)=-zE_p^*T_G^*(I-zS_m^*)^{-1}(T_GT_G^*)^{-1}T_KT_K^*(T_GT_G^*-T_KT_K^*)^{-1}N\de_1^{-1}.
\]
But then
\[
A(z)+C_{21}(z)=-zE_p^*T_G^*(I-zS_m^*)^{-1}M N\de_1^{-1},
\]
where
\begin{align*}
M&=(T_GT_G^*)^{-1}+(T_GT_G^*)^{-1}T_KT_K^*(T_GT_G^*-T_KT_K^*)^{-1}\\
&=(T_GT_G^*)^{-1}\Big((T_GT_G^*-T_KT_K^*)+T_KT_K^*\Big)(T_GT_G^*-T_KT_K^*)^{-1}\\
&=(T_GT_G^*-T_KT_K^*)^{-1}.
\end{align*}
Thus $\Up_{11}(z)={\tht_0}\de_1^{-1} +A(z)+C_{21}(z)$ is equal to the right hand sight of  the \eqref{up11}, and hence the identity \eqref{up11} is proved.
\end{proof}


\begin{remark} We conclude this section with a remark about the coefficients $\Up_{ij}$, $1\leq i, j \leq 2$, in the linear fractional map \eqref{LFTsols}. Since each $X$ given by \eqref{LFTSol3} is a solution to the Leech problem \eqref{Leech1}  associated with $G$ and $K$  we see that
\[
G(z)\Big(\Up_{12}(z)+\Up_{11}(z)Y(z)\Big)=K(z)\Big(\Up_{22}(z)+\Up_{21}(z)Y(z)\Big)
\]
for each $Y$  in $H^\iy_{k\ts q}$ with $\|Y\|_\infty\leq 1$. The previous identity can be rewritten as
\[
G(z)\Up_{12}(z)-K(z)\Up_{22}(z)= -\Big(G(z)\Up_{11}(z)-K(z)\Up_{21}(z)\Big)Y(z).
\]
Using the freedom in the choice of $Y$, we see that the following proposition holds.
\end{remark}

\begin{prop}\label{propGKUps} The functions $\Up_{ij}$, $1\leq i, j \leq 2$, given by \eqref{up11} --  \eqref{up22} satisfy the following identities:
\begin{equation}
\label{idents}
G(z)\Upsilon_{1j}(z)-K(z)\Upsilon_{2j}(z)= 0, \quad z\in \BD\quad (j=1,2).
\end{equation}
\end{prop}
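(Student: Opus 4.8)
The plan is to exploit the freedom in the free parameter $Y$, fleshing out the observation recorded in the remark preceding the proposition. By Theorem \ref{Allsol-LF} every function $X$ produced by the linear fractional map \eqref{LFTSol3} is a solution to the Leech problem, so that $G(z)X(z)=K(z)$ on $\BD$. Clearing the denominator $\Up_{22}(z)+\Up_{21}(z)Y(z)$ gives, for each admissible parameter $Y\in H^\iy_{k\ts q}$ with $\|Y\|_\infty\leq 1$, the identity
\begin{equation*}
G(z)\Up_{12}(z)-K(z)\Up_{22}(z)=-\big(G(z)\Up_{11}(z)-K(z)\Up_{21}(z)\big)Y(z),\quad z\in\BD.
\end{equation*}
The left-hand side does not depend on $Y$, while the right-hand side is linear in $Y$; the whole argument consists of extracting both identities \eqref{idents} by specializing $Y$.

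First I would take $Y\equiv 0$, which is admissible. The right-hand side then vanishes identically, and this yields at once the identity \eqref{idents} for $j=2$, namely $G(z)\Up_{12}(z)-K(z)\Up_{22}(z)=0$ on $\BD$.

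Substituting this back, the displayed identity reduces to $\big(G(z)\Up_{11}(z)-K(z)\Up_{21}(z)\big)Y(z)=0$ on $\BD$, valid for every admissible $Y$. To deduce that the $m\ts k$ matrix function $F(z):=G(z)\Up_{11}(z)-K(z)\Up_{21}(z)$ vanishes, I would plug in constant parameters: for any standard basis vectors $e_i\in\BC^k$ and $e_j\in\BC^q$ the constant function $Y(z)\equiv e_ie_j^*$ has supremum norm equal to $1$, hence is admissible, and $F(z)e_ie_j^*=0$ forces $F(z)e_i=0$. Letting $i$ range over $1,\dots,k$ gives $F\equiv 0$, which is \eqref{idents} for $j=1$.

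There is essentially no hard step here; the only point requiring a little care is the legitimacy of clearing the denominator, i.e.\ that $\Up_{22}+\Up_{21}Y$ is invertible so that the displayed equation holds as a genuine equality of $H^\iy$ functions. This is guaranteed by Theorem \ref{Allsol-LF}, which asserts that the linear fractional map is well defined on the admissible parameters; alternatively, one extends the identity by analyticity from the open dense set where the denominator is invertible to all of $\BD$.
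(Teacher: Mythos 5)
Your argument is correct, and it is essentially the one the paper itself sketches in the remark immediately preceding the proposition (``Using the freedom in the choice of $Y$, we see that the following proposition holds''). The paper's designated proof of Proposition \ref{propGKUps}, however, goes a genuinely different route: it is a direct verification from the explicit formulas \eqref{up11}--\eqref{up22}, built on the resolvent identities of Lemma \ref{lemids1}, the key identity $G(z)A(z)-K(z)B(z)=E_m^*(I-zS_m^*)^{-1}(T_GT_G^*-T_KT_K^*)$ of Lemma \ref{lemids2} with $A(z)=E_p^*T_G^*(I-zS_m^*)^{-1}$ and $B(z)=E_q^*T_K^*(I-zS_m^*)^{-1}$, and the relation $G(z)\tht(z)=0$. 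The trade-off is clear: your argument is much shorter but leans on the full strength of Theorem \ref{Allsol-LF}, hence on the commutant lifting machinery behind it, whereas the paper's computation establishes \eqref{idents} directly from the formulas and therefore serves as an independent consistency check that does not presuppose that the linear fractional map parametrizes the solution set. Your specialization steps are sound: $Y\equiv 0$ gives the case $j=2$, and the constant parameters $Y\equiv e_ie_j^*$ (which have supremum norm one) force $G\Up_{11}-K\Up_{21}\equiv 0$. On the one point you flag, the legitimacy of clearing the denominator, it may be worth noting explicitly that $\Up_{21}(0)=0$ and $\Up_{22}(0)=\de_0$ is positive definite, so $\Up_{22}(z)+\Up_{21}(z)Y(z)$ is invertible in a neighborhood of $z=0$ and the cleared identity extends to all of $\BD$ by analyticity, exactly as you indicate.
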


We use the remaining part of this section to give  a direct proof of the two identities in \eqref{idents}. We begin with two lemmas.

\begin{lem}\label{lemids1} The following identities hold:
\begin{align}
&T_GE_pE_p^*T_G^*=T_G T_G^*- S_mT_G T_G^*S_m^*, \label{idTG}\\
&T_KE_qE_q^*T_K^*=T_K T_K^*- S_mT_K T_K^*S_m^*,  \label{idTK}\\
&E_m^*(I-zS_m^*)^{-1}S_m=zE_m^*(I-zS_m^*)^{-1}\quad (z\in \BD). \label{idS1}
\end{align}
Furthermore, for any $z\in \BD$ and  any bounded linear operator $X$ on $\ell_+^2(\BC^m)$ we have
\begin{equation}
E_m^*(I-zS_m^*)^{-1}\Big(X-S_mXS_m^*\Big) (I-zS_m^*)^{-1}=E_m^*(I-zS_m^*)^{-1}X.\label{idS2}
\end{equation}
\end{lem}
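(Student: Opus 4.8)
The plan is to prove all four identities by direct manipulation of the shift operators and the block-Toeplitz structure, treating \eqref{idTG} and \eqref{idTK} together since they are identical in form, then \eqref{idS1}, and finally \eqref{idS2}, which I expect to be the cleanest once the preceding pieces are in place.

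First I would establish \eqref{idTG} (and \eqref{idTK} by the same argument with $K$ in place of $G$ and $E_q$ in place of $E_p$). The key observation is that $S_m S_m^* = I - E_m E_m^*$ on $\ell^2_+(\BC^m)$, and similarly $S_p S_p^* = I - E_p E_p^*$ on $\ell^2_+(\BC^p)$. Using the intertwining relation $S_m T_G = T_G S_p$ and its adjoint $T_G^* S_m^* = S_p^* T_G^*$, I would compute
\[
S_m T_G T_G^* S_m^* = T_G S_p S_p^* T_G^* = T_G (I - E_p E_p^*) T_G^* = T_G T_G^* - T_G E_p E_p^* T_G^*,
\]
and rearranging gives exactly \eqref{idTG}. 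The identity \eqref{idTK} follows verbatim with $T_K$ and $E_q$, again using $S_m T_K = T_K S_q$.

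Next I would verify \eqref{idS1}. Writing out $(I - z S_m^*)^{-1} = \sum_{j\geq 0} z^j (S_m^*)^j$ and using $S_m^* S_m = I$, one has $(I - z S_m^*)^{-1} S_m = \sum_{j\geq 0} z^j (S_m^*)^j S_m$. Since $E_m^* (S_m^*)^j S_m = E_m^* (S_m^*)^{j-1}$ for $j \geq 1$ (because $S_m^* S_m = I$) while $E_m^* S_m = 0$ (the first coordinate of a shifted sequence is zero), the $j=0$ term drops out and a reindexing yields $E_m^*(I - zS_m^*)^{-1} S_m = z \sum_{j\geq 0} z^{j}E_m^*(S_m^*)^{j} = z E_m^* (I - zS_m^*)^{-1}$, which is \eqref{idS1}. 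Alternatively one can argue operator-theoretically from $S_m^* S_m = I$ and $E_m^* S_m = 0$ without expanding the series.

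Finally, for \eqref{idS2} the main idea is to exploit the resolvent identity $(I - zS_m^*)^{-1} S_m^* \cdot (I - z S_m^*) = S_m^*$ together with the telescoping structure of $X - S_m X S_m^*$. I would insert $I = (I - zS_m^*) + z S_m^*$ appropriately, or equivalently compute $E_m^*(I - zS_m^*)^{-1} S_m X S_m^* (I - zS_m^*)^{-1}$ and show it cancels against the cross terms produced by $X$. Concretely, using \eqref{idS1} on the left factor gives $E_m^*(I - zS_m^*)^{-1} S_m = z E_m^*(I - zS_m^*)^{-1}$, so the $S_m X S_m^*$ contribution becomes $z E_m^*(I - zS_m^*)^{-1} X S_m^* (I - zS_m^*)^{-1}$; then the algebraic identity $z S_m^* (I - zS_m^*)^{-1} = (I - zS_m^*)^{-1} - I$ turns this into a difference that collapses against the $X$ term, leaving precisely $E_m^*(I - zS_m^*)^{-1} X$. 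The one point requiring care is the bookkeeping of where the resolvent acts on the right of $X$; since $X$ is an arbitrary bounded operator and need not commute with $S_m^*$, I would keep $X$ sandwiched throughout and only manipulate the shift factors flanking it, which is exactly what the two auxiliary identities \eqref{idS1} and $zS_m^*(I-zS_m^*)^{-1} = (I-zS_m^*)^{-1}-I$ allow.
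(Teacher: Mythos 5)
Your proposal is correct and follows essentially the same route as the paper: \eqref{idTG} and \eqref{idTK} via $E_pE_p^*=I-S_pS_p^*$ and the intertwining relations, \eqref{idS1} from $S_m^*S_m=I$ and $E_m^*S_m=0$, and \eqref{idS2} by applying \eqref{idS1} to the left factor and then using $zS_m^*(I-zS_m^*)^{-1}=(I-zS_m^*)^{-1}-I$ to make the terms collapse. The only cosmetic difference is your optional power-series expansion for \eqref{idS1}, where the paper uses the one-step resolvent identity you mention as an alternative.
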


\begin{proof}[\bf Proof.]
Note that $E_pE_p^*=I-S_pS_p^*$. Since $T_G$ is a block lower triangular operator $T_GS_p=S_m T_G$, and  $S_p^*T_G^*=T_G^*S_m^*$ by duality. From these remarks \eqref{idTG} is clear. The identity \eqref{idTK} is proved in the same way.

The identity \eqref{idS1} follows from $S_m^*S_m=I$ and $E_m^* S_m=0$. Indeed, using the latter two identities, we see that
\begin{align*}
E_m^*(I-zS_m^*)^{-1}S_m &=E_m^*\big(I+z(I-zS_m^*)^{-1}S_m^*\big) S_m\\
&=E_m^* \Big(S_m+z(I-zS_m^*)^{-1}S_m^*S_m\Big)=zE_m^*(I-zS_m^*)^{-1}.
\end{align*}
Finally, to obtain \eqref{idS2}  we use \eqref{idS1}. Indeed
\begin{align*}
&E_m^*(I-zS_m^*)^{-1}\Big(X-S_mXS_m^*\Big) (I-zS_m^*)^{-1}=\\
&\hspace{.5cm}= E_m^*(I-zS_m^*)^{-1}X (I-zS_m^*)^{-1}+\\
&\hspace{3cm}-E_m^*(I-zS_m^*)^{-1}S_mXS_m^*(I-zS_m^*)^{-1}\\
&\hspace{.5cm}= E_m^*(I-zS_m^*)^{-1}X (I-zS_m^*)^{-1}+\\
&\hspace{3cm}-zE_m^*(I-zS_m^*)^{-1}XS_m^*(I-zS_m^*)^{-1}\\
&\hspace{.5cm}= E_m^*(I-zS_m^*)^{-1}X (I-zS_m^*)^{-1}+\\
&\hspace{3cm}+E_m^*(I-zS_m^*)^{-1}X(I-zS_m^*-I)(I-zS_m^*)^{-1}\\
&\hspace{.5cm}= E_m^*(I-zS_m^*)^{-1}X,
\end{align*}
which completes the proof.
\end{proof}

\begin{lem}\label{lemids2} Put $\de= T_GT_G^*- T_KT_K^* $, and let
\begin{equation}
\label{defAB}
A(z)=E_p^*T_G^*(I-zS_m^*)^{-1} , \quad  B(z)=E_q^*T_K^*(I-zS_m^*)^{-1}.
\end{equation}
Then
\begin{equation}
\label{idGK1}
G(z)A(z)-K(z)B(z)=E_m^*(I-zS_m^*)^{-1}\de\quad (z\in \BD).
\end{equation}
\end{lem}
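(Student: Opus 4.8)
The plan is to reduce everything to the ingredients already assembled in Lemma \ref{lemids1}, namely \eqref{idTG}, \eqref{idTK} and \eqref{idS2}, together with the elementary point--evaluation identities
\[
G(z)=E_m^*(I-zS_m^*)^{-1}T_GE_p,\qquad K(z)=E_m^*(I-zS_m^*)^{-1}T_KE_q \qquad (z\in\BD).
\]
These hold because $T_GE_p$ is the first block column $\col(G_\nu)_{\nu\ge 0}$ of the lower triangular Toeplitz matrix $T_G$, while $E_m^*(I-zS_m^*)^{-1}=\sum_{\nu\ge 0}z^\nu E_m^*(S_m^*)^\nu$ reads off and sums the successive block coordinates; since $E_m^*(S_m^*)^\nu$ extracts the $\nu$-th block coordinate, applying it to $T_GE_p$ returns $\sum_{\nu\ge 0}z^\nu G_\nu=G(z)$, and likewise for $K$.

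First I would rewrite the product $G(z)A(z)$. Substituting the point--evaluation formula for $G(z)$ and recalling from \eqref{defAB} that $A(z)=E_p^*T_G^*(I-zS_m^*)^{-1}$, one obtains
\[
G(z)A(z)=E_m^*(I-zS_m^*)^{-1}\big(T_GE_pE_p^*T_G^*\big)(I-zS_m^*)^{-1}.
\]
Now \eqref{idTG} replaces $T_GE_pE_p^*T_G^*$ by $T_GT_G^*-S_mT_GT_G^*S_m^*$, so the bracketed operator is exactly of the form $X-S_mXS_m^*$ with $X=T_GT_G^*$. Invoking \eqref{idS2} with this choice of $X$ then collapses the two resolvent factors and yields
\[
G(z)A(z)=E_m^*(I-zS_m^*)^{-1}T_GT_G^*.
\]
Running the same computation verbatim with $K$ in place of $G$, using $B(z)=E_q^*T_K^*(I-zS_m^*)^{-1}$, identity \eqref{idTK}, and \eqref{idS2} with $X=T_KT_K^*$, gives $K(z)B(z)=E_m^*(I-zS_m^*)^{-1}T_KT_K^*$.

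Subtracting the two identities and recalling $\de=T_GT_G^*-T_KT_K^*$ then gives
\[
G(z)A(z)-K(z)B(z)=E_m^*(I-zS_m^*)^{-1}\big(T_GT_G^*-T_KT_K^*\big)=E_m^*(I-zS_m^*)^{-1}\de,
\]
which is exactly \eqref{idGK1}. The computation is short, so there is no serious obstacle; the only genuine idea is to turn the \emph{function} $G(z)$ back into the \emph{operator} expression $E_m^*(I-zS_m^*)^{-1}T_GE_p$, so that the sandwiched $E_pE_p^*$ can be absorbed by \eqref{idTG} and the telescoping identity \eqref{idS2} can do the bookkeeping of the resolvent factors. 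The one place to be careful is to apply \eqref{idS2} with the correct operator $X=T_GT_G^*$ (respectively $T_KT_K^*$), i.e.\ to recognize $T_GE_pE_p^*T_G^*$ as $X-S_mXS_m^*$ rather than as the operator $X$ itself.
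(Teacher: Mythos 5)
Your proof is correct and follows essentially the same route as the paper: it uses the realizations $G(z)=E_m^*(I-zS_m^*)^{-1}T_GE_p$, $K(z)=E_m^*(I-zS_m^*)^{-1}T_KE_q$, then \eqref{idTG}, \eqref{idTK}, and finally \eqref{idS2} with $X=T_GT_G^*$ and $X=T_KT_K^*$ before subtracting. No discrepancies to report.
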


\begin{proof}[\bf Proof.]
First note that  that $G$ and $K$ admit the following infinite dimensional realizations:
\begin{align}
G(z)&=E_m^*(I-zS_m^*)^{-1}T_GE_p \quad (z\in \BD), \label{realGinf}\\
K(z)&=E_m^*(I-zS_m^*)^{-1}T_KE_q \quad(z\in \BD)  \label{realKinf}.
\end{align}
Using \eqref{realGinf}, the definition of $A(z)$ in \eqref{defAB}, and the identity  \eqref{idTG}, we see that
\begin{align*}
G(z)A(z)&=E_m^*(I-zS_m^*)^{-1}T_GE_pE_p^*T_G^*(I-zS_m^*)^{-1} \\
&=E_m^*(I-zS_m^*)^{-1}(T_G T_G^*- S_mT_G T_G^*S_m^*) (I-zS_m^*)^{-1}.
\end{align*}
Similarly, using \eqref{realKinf}, the definition of $B(z)$ in \eqref{defAB}, and the identity  \eqref{idTK}, we get
\begin{align*}
K(z)B(z)&=E_m^*(I-zS_m^*)^{-1}T_KE_qE_q^*T_K^*(I-zS_m^*)^{-1}\\
&=E_m^*(I-zS_m^*)^{-1}(T_K T_K^*- S_mT_K T_K^*S_m^*) (I-zS_m^*)^{-1}.
\end{align*}
Applying \eqref{idS2}, first with $X=T_G T_G^*$  and next with $X=T_K T_K^*$, we conclude that
\begin{align}
G(z)A(z)&=E_m^*(I-zS_m^*)^{-1}T_G T_G^*  \quad (z\in \BD),\label{eqGA}\\
K(z)B(z)&=E_m^*(I-zS_m^*)^{-1}T_KT_K^*\quad (z\in \BD). \label{eqKB}
\end{align}
Taking the difference yields \eqref{idGK1}.
\end{proof}

\begin{proof}[\bf Proof of  Proposition \ref{propGKUps}.]
We split the proof into two parts. As  in the preceding lemma, $\de= T_GT_G^*- T_KT_K^* $. Furthermore, throughout $z\in \BD$.

\smallskip
\paragraph{Part 1.} We prove the identity \eqref{idents} for $j=1$. Using the formula for $\tht$ in \eqref{thtform} we see that $\Up_{11}$ can be rewritten in the following equivalent form:
\begin{align*}
\Up_{11}(z)&=\tht(z)\de_1^{-1}+zE_p^*T_G^*(I-zS_m^*)^{-1}(T_GT_G^*)^{-1}N\de_1^{-1}+\\
&\hspace{1cm} -zE_p^*T_G^*(I-zS_m^*)^{-1}(T_GT_G^*-T_KT_K^*)^{-1}N\de_1^{-1}
\end{align*}
The fact that $\im T_\tht=\kr T_G$ implies that $G(z)\tht(z)=0$, and hence, using the definition of $A(z)$ in \eqref{defAB}, we see that
\begin{align*}
G(z)\Up_{11}(z)&=zG(z)A(z)\Big((T_GT_G^*)^{-1}-(T_GT_G^*-T_KT_K^*)^{-1}\Big)N\de_1^{-1}.
\end{align*}
Next, using the definition of $B(z)$ in \eqref{defAB},  we obtain
\[
K(z)\Up_{21}(z)=-zK(z)B(z)(T_GT_G^*-T_KT_K^*)^{-1} N\de_1^{-1}.
\]
Taking the difference, applying \eqref{idGK1} and using \eqref{realKinf},  we get
\begin{align}
&G(z)\Upsilon_{12}(z)-K(z)\Upsilon_{22}(z)=\nonumber\\
&\hspace{.5cm}=zG(z)A(z)  (T_GT_G^*)^{-1}N\de_1^{-1}-z\big (G(z)A(z)-K(z)B(z)\big)\de^{-1}N\de_1^{-1}\nonumber\\
&\hspace{.5cm}=zG(z)A(z)  (T_GT_G^*)^{-1}N\de_1^{-1}-zE_m^*(I-zS_m^*)^{-1} N\de_1^{-1}.\label{GKUps2}
\end{align}
According to  \eqref{eqGA} we have $G(z)A(z)  (T_GT_G^*)^{-1}=E_m^*(I-zS_m^*)^{-1}$. Using the latter identity  in \eqref{GKUps2}, we see that \eqref{idents} holds for $j=1$.

\smallskip
\paragraph{Part 2.}  We prove the identity \eqref{idents} for $j=2$. Note that \eqref{up12} and \eqref{up22} can be rewritten in the following equivalent form;
\begin{align*}
\Upsilon_{12}(z)&=E_p^* T_G^*(I - z S_m^*)^{-1} (T_GT_G^* - T_KT_K^*)^{-1}T_KE_q \de_0^{-1},\\
\Upsilon_{12}(z)&=\de_0^{-1}+ E_q^*T_K^*(I-zS_m^*)^{-1} (T_GT_G^*-T_KT_K^*)^{-1}T_KE_q\Delta_0^{-1}.
\end{align*}
Using \eqref{defAB} and  the above formulas  for $\Up_{12}$ and $\Up_{22}$,  we see that
\begin{align*}
G(z)\Upsilon_{12}(z)&=G(z)A(z)\de^{-1} (T_KE_q\de_0^{-1}),\\
K(z)\Upsilon_{22}(z)&=K(z)\de_0^{-1}+K(z)B(z)\de^{-1} (T_KE_q\de_0^{-1}).
\end{align*}
Taking the difference, applying \eqref{idGK1} and using \eqref{realKinf},  we obtain
\begin{align*}
&G(z)\Upsilon_{12}(z)-K(z)\Upsilon_{22}(z)=\\
&\hspace{.5cm}=\big (G(z)A(z)-K(z)B(z)\big)\de^{-1} (T_KE_q\de_0^{-1})-K(z)\de_0^{-1}\\
&\hspace{.5cm}=E_m^*(I-zS_m^*)^{-1}\de \de^{-1}(T_KE_q\de_0^{-1})-K(z)\de_0^{-1}\\
&\hspace{.5cm}=E_m^*(I-zS_m^*)^{-1} (T_KE_q\de_0^{-1})-E_m^*(I-zS_m^*)^{-1}T_KE_q\de_0^{-1} =0.
\end{align*}
This completes the proof.
\end{proof}



\section{State space computations}\label{S:SSF}

In this section we prove Theorem  \ref{T:main1}. To this end, we first recall some formulas derived in \cite{FtHK-IEOT}. Let $G\in \fR H^\iy_{m\ts p}$ and $K\in \fR H^\iy_{m\ts q}$ be given by the realization of $\begin{bmatrix} G&K\end{bmatrix}$ in \eqref{GKreal}. Assume $T_GT_G^*-T_KT_K^*$ is strictly positive. Then there exist stabilizing solutions $Q$ and $Q_0$ to the Riccati equations \eqref{are} and \eqref{are2}, respectively. Let $P_1$ and $P_2$ be the controllability gramians that solve the Stein equations \eqref{Stein} for $j=1,2$. Define $\de$ and  $A_0$ by \eqref{A0Delta}, the matrices $C_j$, for $j=0,1,2$, $B_0$,  and $\de_{j}$, for $j=0,1$, as in Theorem~\ref{T:main1}. Furthermore, as in Theorem~\ref{T:main1}, the matrix $\om$ is given by
\[
\om=(P_1-P_2)(Q^{-1}+P_2-P_1)^{-1}Q^{-1}.
\]

Now, write $W_{obs}$ and $W_0$ for the observability operators defined by the pairs $\{C,A\}$ and $\{C_0,A_0\}$, respectively, that is,
\[
W_{obs}=\mat{c}{C\\CA\\CA^2\\\vdots},\quad
W_{0}=\mat{c}{C_0\\C_0A_0\\C_0A_0^2\\\vdots}.
\]
The following identities are covered by  \cite[Eq.(5.9)]{FtHK-IEOT} and  \cite[Eq.(5.5)]{FtHK-IEOT} :
\begin{align}\label{FtHK7SSids1}
E_p^* T_G^*W_0=C_1,\quad E_q^* T_K^*W_0=C_2,\quad Q=W_{obs}^*W_0.
\end{align}
Moreover, according to  the comment directly after  \cite[Eq.(5.7)]{FtHK-IEOT} we have
\begin{align}\label{FtHK7SSids2}
S_m^*(T_GT_G^*-T_KT_K^*)^{-1}T_KE_q=W_0B_0.
\end{align}
Finally, let $R$ be the function given by \eqref{defR} and $T_R$ the Toeplitz operator associated with $R$. Recall that $T_GT_G^*-T_KT_K^*$ strictly positive implies $T_R$ is strictly positive. Then Theorem 1.1 in \cite{FtHK-IEOT} yields
\[
(T_GT_G^*-T_KT_K^*)^{-1}=T_R^{-1}+T_R^{-1}W_{obs}\om W_{obs}^* T_R^{-1}.
\]
Along with
\begin{equation}
W_0=T_R^{-1}W_{obs}, \label{FtHK7SSids3}
\end{equation}
which was proved in \cite[Lemma 5.1]{FtHK-IEOT}, this shows that
\begin{align}\label{FtHK7SSids4}
(T_GT_G^*-T_KT_K^*)^{-1}\!\!=T_R^{-1}+W_0\om W_0^*.
\end{align}
Note that \eqref{FtHK7SSids3} also shows that $Q=W_{obs}^*T_R^{-1}W_{obs}$,  by the third identity in~\eqref{FtHK7SSids1}.

Using the formulas in \eqref{FtHK7SSids1} and \eqref{FtHK7SSids2} the state space representations of $\Up_{12}$ and $\Up_{22}$ in Theorem \ref{T:main1}  follow immediately. In fact,  as we have seen before (Part 1 of the proof of Theorem \ref{Allsol-LF}),  $\Up_{12}$ and $\Up_{22}$ are related to $U$ and $V$ in \cite{FtHK-IEOT} through $\Up_{12}\equiv U\de_0^{-1}$ and $\Up_{22}\equiv V\de_0^{-1}$, and the formulas for $\Up_{12}$ and $\Up_{22}$ in Theorem \ref{T:main1} above follow directly from the formulas for $U$ and $V$ derived in \cite{FtHK-IEOT}; see    \cite[Eq. (5.14)]{FtHK-IEOT} and  \cite[Eq. (5.13)]{FtHK-IEOT}, respectively.

In order to show that $\Up_{11}$ and $\Up_{21}$, the two remaining functions  in Theorem \ref{Allsol-LF}, admit the desired finite dimensional state space realizations  requires a bit more work.

\begin{proof}[\bf Proof of Theorem  \ref{T:main1}.]
In order to complete the proof  of Theorems \ref{T:main1}  it suffices to show that $\Up_{11}$ in \eqref{up11} and $\Up_{21}$ in  \eqref{up21} admit finite dimensional state space representations as in \eqref{UpsFin} and that the positive definite matrices $\de_0$ and $\de_1$ defined by \eqref{3del0} and \eqref{3del1} are also given by \eqref{DeltasFin}. Note that in Theorem \ref{T:main1} as well as in Theorem \ref{Allsol-LF} we assume  that there is  no non-zero $x\in \BC^p$ such that $G(z)x$  is identically zero on the open unit disc $\BD$.

In order to compute the remaining state space formulas, we prove the following identity:
\begin{equation}\label{newform}
(T_GT_G^*-T_KT_K^*)^{-1}N=W_0 Q^{-1}(Q^{-1}+P_2-P_1)^{-1}B_1 {\tht_0}.
\end{equation}
First observe that
\begin{align}\label{Nss}
N=S_m^* T_G E_p{\tht_0}=W_{obs}B_1 {\tht_0}.
\end{align}
Now, combining \eqref{FtHK7SSids4} and \eqref{FtHK7SSids3} along with the third identity in \eqref{FtHK7SSids1} we obtain that
\begin{align*}
&(T_GT_G^*-T_KT_K^*)^{-1}W_{obs}=\\
&\qquad\qquad=T_R^{-1}W_{obs}+W_0(P_1-P_2)(Q^{-1}+P_2-P_1)^{-1}Q^{-1}W_0^*W_{obs}\\
&\qquad\qquad=W_0+W_0(P_1-P_2)(Q^{-1}+P_2-P_1)^{-1}\\
&\qquad\qquad=W_0(I+(P_1-P_2)(Q^{-1}+P_2-P_1)^{-1})\\
&\qquad\qquad=W_0(Q^{-1}+P_2-P_1+P_1-P_2)(Q^{-1}+P_2-P_1)^{-1}\\
&\qquad\qquad=W_0Q^{-1}(Q^{-1}+P_2-P_1)^{-1}.
\end{align*}
Together with \eqref{Nss} this gives \eqref{newform}.

Using \eqref{newform} along with $S_m^* W_0=W_0 A_0$ we obtain
\begin{align*}
\Up_{11} (z)
&=\tht_0\Delta_1^{-1}-zE_p^*T_G^*(I-z S_m^*)^{-1}(T_GT_G^*-T_KT_K^*)^{-1}N\Delta_1^{-1}\\
&=\tht_0\Delta_1^{-1}-zE_p^*T_G^*(I-z S_m^*)^{-1}W_0 Q^{-1}(Q^{-1}+P_2-P_1)^{-1}B_1 {\tht_0}\Delta_1^{-1}\\
&=\tht_0\Delta_1^{-1}-zE_p^*T_G^*W_0(I-z A_0)^{-1} Q^{-1}(Q^{-1}+P_2-P_1)^{-1}B_1 {\tht_0}\Delta_1^{-1}\\
&=\tht_0\Delta_1^{-1}-zC_1(I-z A_0)^{-1} Q^{-1}(Q^{-1}+P_2-P_1)^{-1}B_1 {\tht_0}\Delta_1^{-1}.
\end{align*}
To obtain the  last equality we used  the first equality in \eqref{FtHK7SSids1}.  Similarly
\begin{align*}
\Up_{21} (z)
&=-zE_q^*T_K^*(I-z S_m^*)^{-1}(T_GT_G^*-T_KT_K^*)^{-1}N\Delta_1^{-1}\\
&=-zE_q^*T_K^*W_0(I-z A_0)^{-1} Q^{-1}(Q^{-1}+P_2-P_1)^{-1}B_1\Delta_1^{-1}\\
&=-zC_2(I-z A_0)^{-1} Q^{-1}(Q^{-1}+P_2-P_1)^{-1}B_1\Delta_1^{-1}.
\end{align*}
In the final step of the above computation we used the second equality in \eqref{FtHK7SSids1}.

The computations above show that $\Up_{11}$ and $\Up_{21}$ admit the state space representation given by in \eqref{UpsFin}. It remains to show that $\de_0$ and $\de_1$ are the positive definite matrices determined by \eqref{DeltasFin}. The matrix $\de_0$ in fact appears in \cite{FtHK-IEOT}, denoted by $D_V$ in \cite[Eq.(3.4)]{FtHK-IEOT}, and a formula in terms of the state space realization \eqref{GKreal} and related matrices is given in \cite[Eq.(1.16)]{FtHK-IEOT}. We derive here a different formula, given in \eqref{DeltasFin} above, which better exhibits the positive definite character.

Recall from \eqref{3del0} that
\[
\de_0^2=I_q+E_q^*T_K^*(T_GT_G^*-T_KT_K^*)^{-1}T_KE_q.
\]
Using \eqref{FtHK7SSids4} and the second identity in \eqref{FtHK7SSids1} we obtain that
\[
\de_0^2=I_q+C_2\Omega C_2^*+E_q^* T_K^* T_R^{-1}T_K E_q.
\]
Recall that on page 14 of \cite{FtHK-IEOT} it was shown that
\begin{align*}
T_R^{-1}
&=\mat{cc}{\de^{-1} &  -\de^{-1}\Gamma^* W_0^*\\ -W_0 \Gamma \de^{-1} & T_R^{-1}+W_0 \Gamma \Delta^{-1}\Gamma^* W_0^*}\\
&=\mat{c}{I_m\\-W_0\Gamma} \Delta^{-1}\mat{cc}{I_m & -\Gamma^* W_0^*}+\mat{cc}{0&0\\0&T_R^{-1}}.
\end{align*}
Recall that $W_0=T_R^{-1}W_{obs}$, see \eqref{FtHK7SSids3}. Since
\[
T_KE_q=\begin{bmatrix}D_2\\ W_{obs}B_2\end{bmatrix}\ands Q=W_{obs}^*W_0=W_{obs}^*T_R^{-1}W_{obs},
\]
we obtain that
\[
E_q^* T_K^* T_R^{-1}T_K E_q=(D_2-\Gamma^* Q B_2)^* \Delta^{-1} (D_2-\Gamma^* Q B_2) + B_2^* Q B_2.
\]
Therefore, we have
\[
\de_0^2=I_q+C_2\Omega C_2^*+(D_2-\Gamma^* Q B_2)^* \Delta^{-1} (D_2-\Gamma^* Q B_2) + B_2^* Q B_2,
\]
as claimed.

Recall (see \eqref{3del1}) that $\de_1$ is be given by
\begin{equation*}
\de_1^2=I_k+N^*(T_GT_G^*-T_KT_K^*)^{-1}N-N^*(T_GT_G^*)^{-1}N.
\end{equation*}
Using \eqref{newform} we obtain that
\[
N^*(T_GT_G^*-T_KT_K^*)^{-1}N=N^*W_0 Q^{-1}(Q^{-1}+P_2-P_1)^{-1}B_1 {\tht_0}.
\]
By \eqref{Nss} and the third identity in \eqref{FtHK7SSids1} we have $N^*W_0={\tht_0}^*B_1^*Q$. This yields
\begin{equation}\label{De1form1}
N^*(T_GT_G^*-T_KT_K^*)^{-1}N={\tht_0}^*B_1^*(Q^{-1}+P_2-P_1)^{-1}B_1 {\tht_0}.
\end{equation}
For the last summand in the formula of $\de_1^2$ we have to consider the Leech problem \eqref{Leech1} with $K\equiv0$. In that case $P_2=0$ and we write $Q_0$ for the solution to the associated Riccati equation \eqref{are2}. Since the operator $N=S_m^*T_GE_p{\tht_0}$ does not involve $K$, translating \eqref{De1form1} to the case $K\equiv0$ yields
\begin{equation}\label{De1form2}
N^*(T_GT_G^*)^{-1}N={\tht_0}^*B_1^*(Q_0^{-1}-P_1)^{-1}B_1 {\tht_0}.
\end{equation}
Inserting \eqref{De1form1} and \eqref{De1form2} into the formula for $\de_1^2$ derived above gives the formula for $\de_1^2$ in \eqref{DeltasFin}.
\end{proof}

\begin{remark}
Two important special cases of the Leech problem are the Toeplitz corona problem, which can be reduced to the case where $q=m$ and $K$ is identically equal to the identity matrix $I_m$ ($K\equiv I_m$), and the case where $K$ is identically equal to the zero matrix ($K\equiv 0$). On the level of the state space representation \eqref{GKreal} these correspond to the cases $B_2=0$ and $D_2=I_m$, and $B_2=0$ and $D_2=0$, respectively. Recall that the scalar corona problem was proved by Carlson \cite{Carl62} and the matrix case by Fuhrmann \cite{Fuhr68}; see \cite{Peller03} for a discussion of the problem. For the Toeplitz corona problem, Theorem \ref{T:main1} leads to a description of the solutions via a similar linear fractional transformation. We omit the precise formulas for the coefficients $\Upsilon_{ij}$, $i,j=1,2$, and only mention some of the matrices appearing in  Theorem \ref{T:main1} that simplify:
\begin{align*}
&P_2=0,\quad\Gamma=\Gamma_0,\quad C_2=C_0,\quad B_0=A_0\Omega-\Gamma\Delta^{-1},\\
&\hspace{.9cm}\Delta_0^2=I_q+C_0\Omega C_0^* + \Delta^{-1}, \quad  \Delta_1=I_{m-p}.
\end{align*}

The situation is different for the case $K\equiv 0$, i.e., $B_2=0$ and $D_2=0$. Then
\begin{align*}
P_2=0,\quad\Gamma=\Gamma_0,\quad C_2=0,\quad B_0=0,\quad
\Delta_0=I_q,\quad \Delta_1=I_{m-p}.
\end{align*}
From these formulas one immediately obtains that
\[
\Upsilon_{12}(z)=0,\quad
\Upsilon_{21}(z)=0,\quad
\Upsilon_{22}(z)=I_{q}\quad (z\in\BD).
\]
The formula for $\Upsilon_{11}$ reduces to
\[
\Theta_0-zC_1(I-zA_0)^{-1}Q_0^{-1}(Q_0^{-1}-P_1)B_1\Theta_0\quad (z\in\BD)
\]
where $Q_0$ is the stabilizing solution to the Riccati equation \eqref{are2} and
\[
A_0=A-\Gamma_0(R_{10}-\Gamma_0^*Q_0\Gamma_0)^{-1}(C-\Gamma_0^*Q_0A).
\]
On inspection of the formula for $\Upsilon_{11}$ given in Section \ref{S:InfDim}, we see that
\begin{align*}
\Up_{11}(z)&=\tht_0-zE_p^*T_G^*(I-zS_m^*)^{-1}(T_GT_G^*)^{-1}N=\Theta(z),
\end{align*}
where $\Theta$ is the inner function in $H^\iy_{p\ts (p-m)}$ such that $\kr T_G=\im T_\Theta$, see Lemma \ref{L:thtids}. Hence, as expected,  the solutions to the Leech problem \eqref{Leech1} with $K\equiv 0$ are given by $X=\Theta Y$ with $Y$ an arbitrary function in $H^\infty_{(p-m)\ts m}$ with $\|Y\|_\infty\leq 1$.
\end{remark}

\appendix
\section{Commutant lifting}\label{App}
\renewcommand{\theequation}{A.\arabic{equation}}

In this appendix we derive a version of the commutant lifting theorem, based on Theorem VI.6.1 in \cite{FFGK98}, which we need for the proof of our main results.

We begin with some notation. Throughout this appendix $\sH'$ is a subspace of $\ell_+^2(\BC^p)$, invariant under the backward shift $S_p^*$ on $\ell_+^2(\BC^p)$. The latter means there exists an inner function $\tht\in H^\iy_{p\ts k}$ for some positive integer $k\leq p$ such that $\sH'=\kr T_\tht^*$, that is,
\begin{equation}
\label{decom}
\ell_+^2(\BC^p) =\sH'\oplus T_\tht \ell^2_+(\BC^k).
\end{equation}
By $T'$  we denote the compression of the forward shift $S_p$ on $H^2_p$ to $\sH'$.  It follows that $S_p$ admits the following operator $2\ts 2$ block operator matrix representation for appropriate choices of $W$ and $Z$:
\begin{equation} \label{lifting2}
S_p=\begin{bmatrix}
   T'   &    0\\
     W &  Z
\end{bmatrix}\mbox{ on } \begin{bmatrix}
  \sH'\\
  \im T_\tht
\end{bmatrix}.
\end{equation}
Hence $S_ p$ is an isometric lifting of $T'$. The first theorem in this appendix is the following variation on Theorem VI.6.1 in \cite{FFGK98} for the isometric lifting $S_p$ of $T'$. We shall assume that $S_p$ is a minimal isometric lifting of $T'$, that is,
\[
\ell^2_+(\BC^p)=\bigvee_{\nu\geq 0}S_p^\nu \sH.
\]

\begin{thm}\label{cltstrict}
Assume $S_ p$ is a minimal isometric lifting of $T^\prime$, and let $\la$ be a strict contraction mapping $\ell_+^2(\BC^q)$ into $\sH^\prime\subset \ell_+^2(\BC^p)$ satisfying the intertwining relation $T^\prime \la= \la S_q$. Then   all functions $X$ in $H_{p\times q}^\infty$ satisfying
\begin{equation}\label{clt2a}
\la = P_{\sH^\prime} T_X
\quad \mbox{and}\quad \|X\|_\infty \leq 1
\end{equation}
are given by
\begin{equation}\label{RedSolCLT}
X(z)=\Phi_{22}(z)+\Phi_{21}(z)Y(z)\big(I-\Phi_{11}(z)Y(z)\big)^{-1}\Phi_{12}(z). \quad |z|<1.
\end{equation}
Here $Y$ is an arbitrary function in $H^\iy_{k\ts q}$ with $\|Y\|_\infty\leq 1$, and
\begin{align}
\notag \Phi_{11}(z)
&=-z\Delta_0^{-1}E_q^*(I-z\BM)^{-1}B_{\nabla}\Delta_1^{-1},\\
\label{PhisCLT1}
\Phi_{12}(z) &=\Delta_0^{-1}E_q^*(I-z\BM)^{-1}E_q,\\
\notag \Phi_{21}(z)
&=\Theta(z)\Delta_1- \Theta(z)E_k^* T_\Theta^* S_p
\Lambda(I-z\BM)^{-1}B_{\nabla}\Delta_1^{-1},\\
\notag \Phi_{22}(z)
&=E_p^*(I-zS_p^*)^{-1}\Lambda E_q+\Theta(z)E_k^* T_\Theta^*S_p \Lambda
\BM(I-z\BM)^{-1}E_q.
\end{align}
Here $\BM$ is the operator on $\ell_+^2(\BC^q)$, with spectral radius $r_{spec}(\BM)\leq1$, given by
\begin{equation}
\label{defOm1}
\BM =S_q^*- S_q^*(I-\la^*\la)^{-1} E_q\de_0^{-2}E_q^*.
\end{equation}
Furthermore, $B_{\nabla}= (I-\la^*\la)^{-1}\la^*S_p^*T_\tht E_k$, which maps $\BC^k$ into  $\ell_+^2(\BC^q)$, and $\Delta_0$ and $\Delta_1$ are the positive definite matrices  given by
\begin{align}
\Delta_0^2 &=E_q^*(I-\la^* \la)^{-1}E_q \nn\\
\Delta_1^2 &=I_k+E_k^* T_\tht^*  {S_p} \la (I-\la^* \la)^{-1}\la^*  {S_p^*} T_\tht E_k. \label{CLTDeltas1}
\end{align}
Moreover, the $(k+m)\times(m+p)$ coefficient matrix   $\Phi$ defined by
\begin{equation}
\label{defPhiA}
\Phi=\mat{cc}{\Phi_{11}&\Phi_{12}\\ \Phi_{21}& \Phi_{22}},
\end{equation}
with $\Phi_{11},$ $\Phi_{12}$, $\Phi_{21}$ and $\Phi_{22}$  defined above, is inner.
\end{thm}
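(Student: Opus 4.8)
The plan is to derive Theorem~\ref{cltstrict} by specializing the abstract parametrization of Theorem~VI.6.1 in \cite{FFGK98} to the concrete minimal isometric lifting $S_p$ of $T'$ and to the strict contraction $\la$. The first step is to recast the constraints \eqref{clt2a} as a commutant lifting problem. Any bounded operator $B\colon\ell^2_+(\BC^q)\to\ell^2_+(\BC^p)$ satisfying $S_pB=BS_q$ is automatically an analytic Toeplitz operator $B=T_X$ with $X\in H^\iy_{p\ts q}$ and $\|X\|_\iy=\|B\|$; hence the functions $X$ obeying \eqref{clt2a} correspond exactly to the contractive liftings $B$ of $\la$ with $P_{\sH'}B=\la$ and $S_pB=BS_q$. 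Together with the minimality of $S_p$ as an isometric lifting of $T'$, the data $(\la,T',S_p,S_q)$ therefore place us precisely in the framework of Theorem~VI.6.1, so that the solution set is the range of the Redheffer transform \eqref{RedSolCLT}.

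Next I would build the dictionary between the abstract ingredients of \cite{FFGK98} and the explicit operators in \eqref{PhisCLT1}--\eqref{CLTDeltas1}. Because $\la$ is a \emph{strict} contraction, the defect operators $(I-\la^*\la)^{1/2}$ and $(I-\la\la^*)^{1/2}$ are boundedly invertible, so the defect spaces fill the ambient spaces and the factors $(I-\la^*\la)^{-1}$, as well as the invertible normalizers $\Delta_0,\Delta_1$ of \eqref{CLTDeltas1}, are all well defined. The central object is the operator $\BM$ of \eqref{defOm1}, which I would identify as the main operator of the underlying colligation and for which I would first verify $\spec(\BM)\le 1$, so that $(I-z\BM)^{-1}$ is analytic on $\BD$. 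Using the orthogonal decomposition \eqref{decom}--\eqref{lifting2} (in particular $P_{\sH'}T_\tht=0$), the embeddings $E_q$ and $E_k$, and the operator $B_\nabla=(I-\la^*\la)^{-1}\la^*S_p^*T_\tht E_k$, I would then rewrite each of the four abstract coefficients as a transfer function of the form $E_q^*(I-z\BM)^{-1}(\,\cdot\,)$ and match them term by term with $\Phi_{11},\Phi_{12},\Phi_{21},\Phi_{22}$; the choice $Y=0$ should recover the central (maximum entropy) lifting $\Phi_{22}$.

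Finally, to prove that the coefficient matrix $\Phi$ in \eqref{defPhiA} is inner, I would invoke the structural fact, implicit in the construction of \cite{FFGK98}, that the Redheffer coefficient matrix of the commutant lifting parametrization is the transfer function of a conservative (unitary) colligation and is therefore contractive on $\BD$ and unitary almost everywhere on $\BT$; the minimality hypothesis on $S_p$ (Lemma~\ref{lemmin1}) then supplies the observability needed to upgrade contractivity to innerness. Alternatively, one can establish innerness directly by checking that the colligation assembled from $\BM$, $E_q$, $B_\nabla$ and the normalizers $\Delta_0,\Delta_1$ is simultaneously isometric and coisometric.

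I expect the main obstacle to be the explicit computation of the second step. Translating the operator formulas of Theorem~VI.6.1 into the closed forms \eqref{PhisCLT1} requires the correct defect and intertwining identities---notably $T'\la=\la S_q$, $S_pT_X=T_XS_q$, and the orthogonality furnished by \eqref{decom}---to make the four coefficients collapse exactly as stated, and it requires establishing $\spec(\BM)\le 1$ before the resolvent expressions are even meaningful. Getting the precise normalization by $\Delta_0$ and $\Delta_1$ right is the delicate bookkeeping that carries most of the technical weight, while the innerness of $\Phi$, although conceptually standard, must be tracked carefully to match the stated form of \eqref{defPhiA}.
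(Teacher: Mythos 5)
Your overall strategy is the same as the paper's: reduce \eqref{clt2a} to a commutant lifting problem for the intertwining contraction $\la$ and specialize Theorem VI.6.1 of \cite{FFGK98} to the lifting $S_p$, then translate the abstract coefficients into the closed forms \eqref{PhisCLT1}. The reduction step and the remark that intertwiners of $S_p$ and $S_q$ are analytic Toeplitz operators are correct, and you rightly flag the translation as the technical core. However, your ``dictionary'' is missing its key entry. Theorem VI.6.1 of \cite{FFGK98} parametrizes the liftings of $\la$ to the \emph{Sz.-Nagy--Sch\"affer} lifting $U_{NS}'$ acting on $\sH'\oplus\ell^2_+(\sD')$, with formulas written in terms of the defect operator $D'=(I-T'^*T')^{1/2}$ of $T'$ and a free parameter valued in the defect space $\sD'$. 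Your proposal only discusses the defect operators of $\la$; the defect space $\sD'$ of $T'$ never appears. To arrive at the stated formulas --- with $T_\tht E_k$, $E_k^*T_\tht^*S_p\la$, a $k\ts k$ normalizer $\Delta_1$, and $Y\in H^\iy_{k\ts q}$ --- one must use the minimality of $S_p$ to produce the unique unitary $\Psi_0:\ell^2_+(\sD')\to\im T_\tht$ intertwining $U_{NS}'$ with $S_p$, and then prove (this is Lemma \ref{lemPsiXi}) that $\Psi_0=T_\tht T_{N_0}$ for a \emph{constant} unitary $N_0:\sD'\to\BC^k$ satisfying $N_0D'=E_k^*T_\tht^*W$, whence $W=T_\tht E_kN_0D'$ and $N_0\wtil{\Delta}_1=\Delta_1N_0$. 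This is where minimality actually enters the formulas; your suggestion that minimality ``supplies the observability needed to upgrade contractivity to innerness'' misplaces its role --- innerness of the coefficient matrix is already part of the conclusion of Theorem VI.6.1, while minimality is what makes the parametrization relative to $S_p$ exhaustive and makes $N_0$ exist.

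A second concrete point you would hit: the operator $\BM$ does not appear in \cite{FFGK98} in the form \eqref{defOm1} but as $(I-S_q^*\la^*\la S_q)^{-1}S_q^*(I-\la^*\la)$, and the spectral radius bound is inherited from that source rather than something to be verified ab initio. Equating the two expressions is a genuine computation: one writes $S_qS_q^*=I-E_qE_q^*$, so that $I-\la^*\la S_qS_q^*=(I-\la^*\la)+\la^*\la E_qE_q^*$, and applies the standard rank-perturbation inversion formula, in which the Schur-type complement turns out to be exactly $\de_0^2=E_q^*(I-\la^*\la)^{-1}E_q$. Without this identity the resolvent expressions in \eqref{PhisCLT1} cannot be matched to the abstract ones. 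Neither of these two steps is insurmountable, but both are absent from your plan, and they constitute essentially the entire content of the paper's proof beyond the citation of \cite{FFGK98}.
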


It is useful to first prove some preliminary results.

The description of intertwining liftings in Theorem VI.6.1 in \cite{FFGK98} is with respect to the {\em Sz-Nagy-Sch\"affer isometric lifting} $U'_{NS}$ of $T'$, which is given by
\begin{equation} \label{lifting3}
U_{NS}'=\begin{bmatrix}
   T'   &    0\\
   E'D' &  S_{\sD'}
\end{bmatrix}\mbox{ on }\mat{c}{\sH'\\ \ell^2_+(\sD')}.
\end{equation}
Here $D'$ is the defect operator defined by $T'$, and $\sD'$ is the corresponding defect space, i.e., $D'=(I-T'^*T')^\half$ and $\sD'$ is the closure of $\im D'$. Furthermore, $E':\sD' \to \ell^2_+(\sD')$ is the canonical embedding defined by $(E'd')=(d',0,0,\ldots)$ for each $d'\in \sD'$. It is well known that $U'_{NS}$ is a minimal isometric lifting of $T'$. Since $S_p$ is assumed to be a minimal isometric lifting, there exists a unique unitary operator $\Psi_0$  mapping $\ell^2_+(\sD')$ onto $\im T_\tht=\ell^2_+(\BC^p)\ominus \sH'$ such that
\begin{equation}
\label{defPsi0a}
\begin{bmatrix}
I_{\sH'}  &    0\\
0 &  \Psi_0
\end{bmatrix}\begin{bmatrix}
   T'   &    0\\
   E'D' &  S_{\sD'}
\end{bmatrix}=\begin{bmatrix}
   T'   &    0\\
     W &  Z
\end{bmatrix}\begin{bmatrix}
I_{\sH'}  &    0\\
0 &  \Psi_0
\end{bmatrix}.
\end{equation}
The next lemma provides a description of the unitary operator $\Psi_0$.

\begin{lem} \label{lemPsiXi}
{Assume $S_ p$ is a minimal isometric lifting of $T^\prime$.} Let $\Psi_0$ be the unitary operator defined by \eqref{defPsi0a}, and let $\Xi$ be the unitary operator defined by
\begin{equation}\label{defXi}
\Xi: \ell^2_+(\BC^k)\to \ell^2_+(\BC^p)\ominus\sH', \quad \Xi g=T_\tht g\quad (g\in \ell^2_+(\BC^k)).
\end{equation}
Then there exists a unitary operator $N_0$ from $\sD'$ onto $\BC^k$ such that $\Psi_0=\Xi T_{N_0}$, with $T_{N_0}$ the diagonal Toeplitz operator defined by the constant function with value $N_0$, i.e.,
\begin{equation}
\label{prN0}
\Psi_0 f =T_\Theta T_{N_0}f \quad (f\in\ell^2_+(\sD')).
\end{equation}
Moreover,
\begin{itemize}
\item[\textup{(i)}] the matrix $N_0$ is uniquely determined by the identity
\begin{equation}\label{defN0}
N_0 D'=E_k^*\Xi^*W;
\end{equation}
\item[\textup{(ii)}]  the operator $W$ in \eqref{lifting2} is given by $W=T_\tht E_k N_0 D'$.
\end{itemize}
\end{lem}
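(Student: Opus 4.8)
The plan is to exploit the defining intertwining relation \eqref{defPsi0a} and the structure of the Sz.-Nagy--Sch\"affer lifting. First I would observe that since $\Psi_0$ is a unitary operator from $\ell^2_+(\sD')$ onto $\im T_\tht$ and $\Xi$ is a unitary operator from $\ell^2_+(\BC^k)$ onto the same space $\im T_\tht$, the composition $\Xi^*\Psi_0$ is a unitary operator on sequence spaces; the goal is to show it is a \emph{constant} diagonal Toeplitz operator $T_{N_0}$ for a suitable unitary $N_0:\sD'\to\BC^k$. The key is that both $\Psi_0$ and $\Xi$ intertwine the relevant shifts: $\Xi$ satisfies $\Xi S_{\sD'}\cdots$ in the sense that $T_\tht S_k=S_p T_\tht$, while the bottom-right block of \eqref{defPsi0a} gives $\Psi_0 S_{\sD'}=Z\Psi_0$, where $Z=S_p|_{\im T_\tht}$ is the compression of $S_p$. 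Matching these intertwining relations forces $\Xi^*\Psi_0$ to commute with the relevant shift, and a unitary operator on $\ell^2_+$ commuting with the forward shift that also respects the block structure must be a constant diagonal Toeplitz operator; this yields $\Psi_0=\Xi T_{N_0}=T_\tht T_{N_0}$, establishing \eqref{prN0}.

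For part (i), I would read off the bottom-left blocks of the intertwining identity \eqref{defPsi0a}. The bottom-left block on the left-hand side is $\Psi_0 E'D'$, and on the right-hand side it is $W$ (since the bottom-left of the product is $W\cdot I_{\sH'}$). Hence $W=\Psi_0 E'D'=T_\tht T_{N_0}E'D'$. Because $E'$ is the canonical embedding of $\sD'$ into the first coordinate of $\ell^2_+(\sD')$ and $T_{N_0}$ is diagonal, one has $T_{N_0}E'=E_k N_0$ (both map into the zeroth coordinate and act there by $N_0$). Therefore $W=T_\tht E_k N_0 D'=\Xi E_k N_0 D'$, and applying $E_k^*\Xi^*$ to both sides together with $E_k^*E_k=I_k$ gives $E_k^*\Xi^* W=N_0 D'$, which is exactly \eqref{defN0}. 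Uniqueness of $N_0$ follows since $D'$ has dense range in $\sD'$, so $N_0$ is determined on a dense set and hence everywhere by continuity.

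Part (ii) is then immediate: the identity $W=T_\tht E_k N_0 D'$ derived above is precisely the claimed formula, since $\Xi E_k=T_\tht E_k$. The main obstacle I anticipate is the first step --- rigorously showing that $\Xi^*\Psi_0$ is a \emph{constant} Toeplitz operator rather than a general shift-commuting analytic Toeplitz operator. The cleanest route is to track the initial spaces: $\Psi_0$ carries the ``first coordinate'' subspace $\im E'\subset\ell^2_+(\sD')$ into a specific wandering subspace of $\im T_\tht$ determined by the lifting, and one must check that $\Xi$ sends the first coordinate of $\ell^2_+(\BC^k)$ to the same wandering subspace, so that the unitary $\Xi^*\Psi_0$ preserves the zeroth coordinate and commutes with the shift, forcing it to be diagonal and constant. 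I would handle this by directly computing the action of both maps on $\im E'$ and $\im E_k$ using the Wold-type decomposition of $\im T_\tht$ induced by $Z=S_p|_{\im T_\tht}$, whose wandering subspace is exactly $T_\tht E_k\BC^k$. Once the wandering subspaces are matched, the constancy of $N_0$ and the remaining identities follow by routine block bookkeeping.
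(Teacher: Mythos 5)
Your proposal is correct and follows essentially the same route as the paper: both derive the intertwining relation $(\Xi^*\Psi_0)S_{\sD'}=S_k(\Xi^*\Psi_0)$ from \eqref{defPsi0a} together with $S_pT_\tht=T_\tht S_k$, conclude that the unitary $\Xi^*\Psi_0$ must be a constant block diagonal Toeplitz operator $T_{N_0}$, and then obtain (i) and (ii) by reading off the bottom-left block identity $W=\Psi_0E'D'=T_\tht E_kN_0D'$. The wandering-subspace argument you sketch to justify the constancy of $\Xi^*\Psi_0$ is exactly the standard fact the paper invokes without proof, so there is no gap.
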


\begin{proof}[\bf Proof.]
From the definition of $\Xi$  and the fact that $\tht$ is inner we see that  $T_\tht$ admits the following partitioning:
\[
T_\tht=\begin{bmatrix}0\\ \Xi\end{bmatrix}: \ell^2_+(\BC^k)\to
\begin{bmatrix}\sH'\\ \im T_\tht\end{bmatrix}.
\]
Since $S_pT_\tht=T_\tht S_k$, this implies that
\[
\begin{bmatrix} I_{\sH'}  &    0\\ 0 &  \Xi^*  \end{bmatrix}
\begin{bmatrix} T'   &    0\\  W &  Z \end{bmatrix}
=\begin{bmatrix} T'   &    0\\  \Xi^*W &  S_k \end{bmatrix}
\begin{bmatrix} I_{\sH'}  &    0\\ 0 &  \Xi^*  \end{bmatrix}.
\]
But then  \eqref{defPsi0a} yields
\[
\begin{bmatrix} I_{\sH'}  &    0\\ 0 & \Xi^* \Psi_0  \end{bmatrix}
\begin{bmatrix} T'   &    0\\ E'D' &  S_{\sD'} \end{bmatrix}
=\begin{bmatrix} T'   &    0\\  \Xi^*W &  S_k \end{bmatrix}
\begin{bmatrix} I_{\sH'}  &    0\\ 0 &  \Xi^* \Psi_0 \end{bmatrix}.
\]
In particular, $ (\Xi^* \Psi_0)S_{\sD'}= S_k (\Xi^* \Psi_0)$. Since the operator $\Xi^* \Psi_0$ is unitary, the latter intertwining relation implies that $\Xi^* \Psi_0$ is a block diagonal Toeplitz operator $T_{N_0}=\diag (N_0, N_0, \ldots)$,
where  $N_0$ is a unitary operator from  $\sD'$ onto $\BC^k$.

The identity  $T_{N_0}=\Xi^* \Psi_0$ and the fact that $\Xi$ is unitary imply that $ \Xi T_{N_0}=\Psi_0$. Using the definition of $\Xi$ in \eqref{defXi}  the latter identity yields \eqref{prN0}. Finally, from $T_{N_0}E'D'=\Xi^* \Psi_0E'D'=\Xi^*W$ we obtain \eqref{defN0}.
\end{proof}

\begin{proof}[\bf Proof of Theorem \ref{cltstrict}.]
The characterization of all solutions in \eqref{RedSolCLT} follows by applying Theorem VI.6.1 from \cite{FFGK98} to the commutant lifting data described above. Note that $\|\la\|<\g=1$ implies $\la$ is a strict contraction. Directly applying the formulas from \cite{FFGK98}, using $A=\Lambda$, $T=S_q$ and $\Pi_0=E_q^*$ and multiplying with $\Theta(z)N_0$ on the right, as noted in Lemma \ref{lemPsiXi}, we obtain that the functions $X$ in $H^\iy_{p\ts q}$ satisfying \eqref{clt2a} are given by \eqref{RedSolCLT} with
\begin{align}
\notag \Phi_{11}(z)
&=-z\Delta_0^{-1}E_q^*(I-z\BM)^{-1}(I-\la^*\la)^{-1}\Lambda^*D'\wtil{\Delta}_1^{-1}N_0^*\\
\label{PhisCLT2}
\Phi_{12}(z) &=\Delta_0^{-1}E_q^*(I-z\BM)^{-1}E_q\\
\notag \Phi_{21}(z)
&=\Theta(z)N_0(\wtil{\Delta}_1^2-D'\Lambda(I-z\BM)^{-1}(I-\la^*\la)^{-1}\Lambda^*D')\wtil{\Delta}_1^{-1}N_0^*\\
\notag \Phi_{22}(z)
&=E_p^*(I-zS_p^*)^{-1}\Lambda E_q+\Theta(z)N_0 D'\Lambda
\BM(I-z\BM)^{-1}E_q.
\end{align}
where $\Delta_0$ (in \cite{FFGK98} denoted by $N$) is as in \eqref{CLTDeltas1} and $\BM$ and $\wtil{\Delta}_1$  (in \cite{FFGK98} denoted by  {$T_A^*$} and $N_1$, respectively) are given by
\begin{equation}\label{CLTomDe1}
\BM =(I-S_q^* \la^* \la S_q)^{-1} S_q^* (I-\la^* \la)\ands
\wtil{\Delta}_1^2 =I_k+D'\Lambda D_\la^{-2}\Lambda^* D'.
\end{equation}
Here we multiplied the formulas in \cite{FFGK98} for $\Phi_{11}$ and $\Phi_{21}$ with the unitary operator $N_0^*:\BC^k\to \sD'$ from Lemma \ref{lemPsiXi}, so that the free parameter function $Y$ maps into the right space.

Using the fact that $N_0$ is a unitary operator satisfying \eqref{defN0}, it is obvious that $N_0 \wtil{\Delta}_1^2=\Delta_1^2 N_0$. Then, also $N_0 \wtil{\Delta}_1=\Delta_1 N_0$ and $N_0 \wtil{\Delta}_1^{-1}=\Delta_1^{-1} N_0$. It remains to show that the formulas for $\BM$ in \eqref{CLTomDe1} and \eqref{CLTDeltas1} coincide. Indeed, once this fact is established, it easily follows from the intertwining relations for $\wtil{\Delta}_1$ and $\Delta_1$, together with \eqref{defN0}, that the functions $\Phi_{ij}$ in \eqref{PhisCLT2} are also given by \eqref{PhisCLT1}.

To see that the two formulas for $\BM$ coincide, note that
\begin{align*}
\BM&=(I-S_q^* \la^* \la S_q)^{-1} S_q^* (I-\la^* \la)\\
&=S_q^*(I-\la^*\la S_qS_q^*)^{-1} (I-\la^*\la)\\
&=S_q^*(I-\la^*\la (I-E_qE_q^*))^{-1} (I-\la^*\la)\\
&=S_q^*((I-\la^*\la)+\la^*\la E_qE_q^*)^{-1}(I-\la^*\la)\\
&=S_q^*(I+(I-\la^*\la)^{-1}\la^*\la E_qE_q^*)^{-1}.
\end{align*}
Now set
\[
A=I,\quad B=E_q^*,\quad C=(I-\la^*\la)^{-1}\la^*\la E_q,\quad D=I.
\]
Since $I+(I-\la^*\la)^{-1}\la^*\la E_qE_q^*=D+CA^{-1}B$ is invertible, so is
\begin{align*}
A^\times &:=A+B D^{-1}C=I+E_q^*(I-\la^*\la)^{-1}\la^*\la E_q\\
&=E_q^*(I+(I-\la^*\la)^{-1}\la^*\la)E_q
=E_q^*(I-\la^*\la)^{-1}E_q
=\de_0^2.
\end{align*}
By standard inversion formulas, cf., \cite{BGK79}, we obtain that
\begin{align*}
\BM &=S_q^*(D^{-1}-D^{-1}C(A^\times)^{-1}BD^{-1})\notag\\
&=S_q^*(I-(I-\la^*\la)^{-1}\la^*\la E_q\de_0^{-2}E_q^*)\notag\\
&=S_q^*- S_q^*(I-\la^*\la)^{-1}\left(I-(I-\la^*\la)\right) E_q\de_0^{-2}E_q^*\notag \\
&=S_q^*- S_q^*(I-\la^*\la)^{-1}E_q\de_0^{-2}E_q^* +S_q^*E_q\de_0^{-2}E_q^*\notag\\
&=S_q^*- S_q^*(I-\la^*\la)^{-1}E_q\de_0^{-2}E_q^*.
\end{align*}
Here we used that $S_q^*E_k=0$.  The latter identity implies $E_k^*S_q=0$, and hence $\BM S_q=I$.
Hence $\BM$ is given  by \eqref{defOm1}. Therefore
\begin{equation}\label{om2}
\BM = (I-S_q^* \la^* \la S_q)^{-1} S_q^* (I-\la^* \la) = S_q^*- S_q^*(I-\la^*\la)^{-1}E_q\de_0^{-2}E_q^*.
\end{equation}
\end{proof}

As in \cite{FtHK-IEOT} we shall need the following functions:
\begin{align}
U(z) &= E_p^* \left(I - z S_p^*\right)^{-1} \la\left(I - \la^* \la  \right)^{-1}  E_q, \label{defU} \\
V(z) &= E_q^* \left(I - z S_q^*\right)^{-1}  \left(I - \la^* \la  \right)^{-1} E_q.
\label{defV}
\end{align}
As mentioned in Theorem 2.1 in \cite{FtHK-IEOT},  $\det V(z)\not = 0$ for $| z|<1$, the function $V^{-1}$ belongs to  $H_{q\times q}^\infty$ and is  an outer function.

\begin{prop}
Let $\Phi_{12}$ and $\Phi_{22}$ be as in \eqref{PhisCLT1}, and let $U$ and $V$ be given by \eqref{defU} and \eqref{defV}, respectively. Then
\begin{equation}
\label{2Phis}
\Phi_{12}(z) =\de_0 V(z)^{-1} \ands  \Phi_{22}(z)  =U(z)V(z)^{-1}\quad(z\in \BD).
\end{equation}
\end{prop}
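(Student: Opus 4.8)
The plan is to verify the two identities $\Phi_{12}=\Delta_0 V^{-1}$ and $\Phi_{22}=UV^{-1}$ separately, the computation in both cases hinging on the observation that, by \eqref{defOm1}, the operator $\BM=S_q^*-S_q^*(I-\la^*\la)^{-1}E_q\Delta_0^{-2}E_q^*$ is a finite-rank (rank at most $q$) perturbation of the backward shift $S_q^*$, so that its resolvent can be expressed through $(I-zS_q^*)^{-1}$ by the standard Sherman--Morrison--Woodbury inversion formula. Writing $I-z\BM=(I-zS_q^*)+zS_q^*FE_q^*$ with $F=(I-\la^*\la)^{-1}E_q\Delta_0^{-2}$ and $L(z)=(I-zS_q^*)^{-1}$, I would first record the two elementary shift identities $S_q^*E_q=0$ (whence $L(z)E_q=E_q$) and $zL(z)S_q^*=L(z)-I$. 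These give $E_q^*L(z)E_q=I_q$ and, using $E_q^*(I-\la^*\la)^{-1}E_q=\Delta_0^2$ from \eqref{CLTDeltas1} together with $V(z)=E_q^*L(z)(I-\la^*\la)^{-1}E_q$ from \eqref{defV}, the key evaluation $I_q+zE_q^*L(z)S_q^*F=V(z)\Delta_0^{-2}$.

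For the first identity, substituting these data into the Woodbury expansion of $E_q^*(I-z\BM)^{-1}E_q$ collapses it to $I_q-(V\Delta_0^{-2}-I_q)\Delta_0^2V^{-1}=\Delta_0^2V^{-1}$, so that $\Phi_{12}=\Delta_0^{-1}E_q^*(I-z\BM)^{-1}E_q=\Delta_0 V^{-1}$, as claimed.

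For the second identity, the same Woodbury computation yields, after simplification, the clean vector formula
\begin{equation*}
\BM(I-z\BM)^{-1}E_q=-(I-zS_q^*)^{-1}S_q^*(I-\la^*\la)^{-1}E_q\,V(z)^{-1}.
\end{equation*}
Inserting this into the formula \eqref{PhisCLT1} for $\Phi_{22}$ and multiplying on the right by $V$ (which cancels the factor $V^{-1}$) reduces the target $\Phi_{22}V=U$ to the single identity
\begin{equation*}
E_p^*(I-zS_p^*)^{-1}\la\big(E_qV-(I-\la^*\la)^{-1}E_q\big)=\Theta(z)E_k^*T_\Theta^*S_p\la(I-zS_q^*)^{-1}S_q^*(I-\la^*\la)^{-1}E_q.
\end{equation*}
Writing $\rho(z)=(I-zS_q^*)^{-1}(I-\la^*\la)^{-1}E_q$ and using $E_qE_q^*=I-S_qS_q^*$ together with $(I-zS_q^*)\rho=(I-\la^*\la)^{-1}E_q$, the bracket on the left simplifies to $-(S_q-zI)S_q^*\rho$; the intertwining relation $\la S_q=T'\la$ of Theorem \ref{cltstrict} then turns the left-hand side into $-E_p^*(I-zS_p^*)^{-1}(T'-zI)w$ and the right-hand side into $\Theta(z)E_k^*T_\Theta^*S_pw$, where $w:=\la S_q^*\rho\in\sH'$.

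The main obstacle, and the only step genuinely using the lifting structure, is the resulting operator identity
\begin{equation*}
-E_p^*(I-zS_p^*)^{-1}(T'-zI)w=\Theta(z)E_k^*T_\Theta^*S_pw\qquad(w\in\sH').
\end{equation*}
To prove it I would start from the realization $\Theta(z)=E_p^*(I-zS_p^*)^{-1}T_\Theta E_k$ (established in the proof of Lemma \ref{L:thtids}) and use $S_pT_\Theta=T_\Theta S_k$ to rewrite $T_\Theta E_kE_k^*T_\Theta^*=T_\Theta T_\Theta^*-S_pT_\Theta T_\Theta^*S_p^*=P_\Theta-S_pP_\Theta S_p^*$, where $P_\Theta=T_\Theta T_\Theta^*=I-P_{\sH'}$. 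This recasts the right-hand side as $E_p^*(I-zS_p^*)^{-1}(P_\Theta S_p-S_pP_\Theta)w$, which for $w\in\sH'$ (so $P_\Theta w=0$) equals $E_p^*(I-zS_p^*)^{-1}(S_p-T')w$ via $T'w=P_{\sH'}S_pw$ coming from \eqref{lifting2}. Since the left-hand side equals $E_p^*(I-zS_p^*)^{-1}(zI-T')w$, the identity reduces to $E_p^*(I-zS_p^*)^{-1}(S_p-zI)w=0$, which is immediate from $E_p^*(I-zS_p^*)^{-1}S_p=zE_p^*(I-zS_p^*)^{-1}$, itself a consequence of $E_p^*S_p=0$ and $S_p^*S_p=I$. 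I expect the bookkeeping in passing from $\Phi_{22}V=U$ down to this last identity, rather than any single computation, to be the delicate part.
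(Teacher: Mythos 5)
Your proof is correct, and for the second identity it takes a genuinely different route from the paper's. For $\Phi_{12}$ the two arguments are near-duals of each other: the paper inverts the transfer function $\Phi_{12}(z)=\de_0^{-1}+z\de_0^{-1}E_q^*(I-z\BM)^{-1}\BM E_q$ via the state-space inversion formula (computing the associate operator $\BM^\ts=S_q^*$ and showing $\Phi_{12}(z)^{-1}\de_0=V(z)$), whereas you invert the resolvent $(I-z\BM)^{-1}$ itself by Woodbury, using that $\BM$ is a rank-$q$ perturbation of $S_q^*$; both hinge on the same structural fact and your evaluation $I_q+zE_q^*L(z)S_q^*F=V(z)\de_0^{-2}$ is exactly where $V$ enters in either version. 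For $\Phi_{22}$ the paper's direct proof first rewrites the $\tht$-term through $W=T_\tht E_kN_0D'$ (Lemma \ref{lemPsiXi}(ii)), groups everything into $E_p^*(I-zS_p^*)^{-1}M(z)(I-z\BM)^{-1}E_q$ with $M(z)=\la(I-z\BM)+W\la\BM$, and exploits the annihilation identity $E_p^*(I-zS_p^*)^{-1}M(z)S_q=0$ together with $\BM(I-\la^*\la)^{-1}E_q=0$ to collapse the expression to $U V^{-1}$. You instead compute $\BM(I-z\BM)^{-1}E_q$ in closed form from Woodbury and reduce $\Phi_{22}V=U$ to the single identity $-E_p^*(I-zS_p^*)^{-1}(T'-zI)w=\tht(z)E_k^*T_\tht^*S_pw$ for $w\in\sH'$, which you settle with the projection identity $T_\tht E_kE_k^*T_\tht^*=P_\tht-S_pP_\tht S_p^*$ and $E_p^*(I-zS_p^*)^{-1}(S_p-zI)=0$. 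Your route buys a proof that never invokes $W$, $N_0$ or the Sz.-Nagy--Sch\"affer comparison of Lemma \ref{lemPsiXi}, working entirely from \eqref{PhisCLT1} and the geometry of $\sH'=\kr T_\tht^*$; the paper's grouping into $M(z)$ is arguably slicker bookkeeping but leans on the lifting representation \eqref{lifting2}. One cosmetic point: the invertibility of the Woodbury middle factor $V(z)\de_0^{-2}$ should be noted explicitly; it follows from the invertibility of $I-z\BM$ (since $\spec(\BM)\le1$) together with the standard Schur-complement equivalence, and is consistent with the remark after \eqref{defV} that $\det V(z)\neq0$ on $\BD$.
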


\begin{proof}[\bf Proof]
%
%
First we prove the first identity in \eqref{2Phis}. From the definition of  $\Phi_{12}$ in  \eqref{PhisCLT1} it is clear that
\[
\Phi_{12}(z)  = \de_0^{-1}+z \de_0^{-1}E_q^* \left(I-z\BM\right)^{-1}\BM E_q.
\]
Using \cite[Theorem 2.1]{BGKR08}, it follows that  in a neighborhood of zero we  have
\[
\Phi_{12}(z) ^{-1}=\de_0 -zE_q^* \left(I-z\BM^\ts  \right)^{-1}\BM E_q\de_0.
\]
This with \eqref{om2} yields
\begin{align}
\BM^\ts  &=\BM -(\BM E_q)\de_0 (\de_0 ^{-1}E_q^*)=\BM-\BM E_qE_q^*\label{omts1}\\
&=\BM S_qS_q^* =(I-S_q^* \la^* \la S_q)^{-1} S_q^* (I-\la^* \la)S_qS_q^*
=S_q^*. \label{omts2}
\end{align}
Using \eqref{omts2} it follows that
\begin{equation}
\Phi_{12}(z)^{-1}=\de_0 -zE_q^* \left(I-zS_q^* \right)^{-1}\BM E_q\de_0, \quad z\in \BD.\label{Phi12min}
\end{equation}
Next, note that
\begin{align*}
\BM E_q\de_0^2&=\left( S_q^*- S_q^*(I-\la^*\la)^{-1}E_q\de_0^{-2}E_q^* \right)E_q\de_0^2\\
&=- S_q^*(I-\la^*\la)^{-1}E_q\de_0^{-2}E_q^* E_q\de_0^2\\
&=- S_q^*(I-\la^*\la)^{-1}E_q.
\end{align*}
Hence
\begin{align*}
\Phi_{12}(z)^{-1}\de_0&=\de_0^2-zE_q^* \left(I-zS_q^* \right)^{-1}\BM E_q\de_0^2\\
&=\de_0^2+zE_q^* \left(I-zS_q^* \right)^{-1} S_q^*(I-\la^*\la)^{-1}E_q\\
&=\de_0^2+ E_q^* \left(I-zS_q^* \right)^{-1} \left(I- (I-zS_q^*)\right)(I-\la^*\la)^{-1}E_q\\
&=\de_0^2+ E_q^* \left(I-zS_q^* \right)^{-1}(I-\la^*\la)^{-1}E_q-E_q^* (I-\la^*\la)^{-1}E_q\\
&=E_q^* \left(I-zS_q^* \right)^{-1}(I-\la^*\la)^{-1}E_q=V(z).
\end{align*}
This proves the first identity in \eqref{2Phis}.

To prove  the second identity in \eqref{2Phis}, note that $\Phi_{22}$ is the so-called central solution, i.e, the solution that one obtains if the free parameter $Y$ in \eqref{RedSolCLT} is taken to be zero.  But then  \cite[Theorem IV.7.1]{FFGK98}  tells us that  $\Phi_{22}$ is the maximum entropy solution and we can apply \cite[Propositon 3.1]{FtHK-IEOT} to show that the second identity in \eqref{2Phis} holds true. For the sake of completeness we   also give a direct proof.

We take $\Phi_{22}$  as in \eqref{PhisCLT2}. This formula can be rewritten as
\begin{align*}
\Phi_{22}(z)&=E_p^*(I-zS_p^*)^{-1}\Big(\la (I-z\BM) +T_\tht E_k N_0 D' \la
\BM\Big)(I-z\BM)^{-1}E_q\\
&=E_p^*(I-zS_p^*)^{-1}\Big(\la (I-z\BM)+W\la \BM \Big)(I-z\BM)^{-1}E_q.
\end{align*}
Here we used the identity $\tht(z)=E_p^*(I-zS_p^*)^{-1}T_\tht E_k$  and item (ii) in Lemma \ref{lemPsiXi}. Put $M(z)= \la (I-z\BM)+W\la \BM$. This operator function admits the following partitioning:
\[
M(z)=\begin{bmatrix} \la (I-z\BM) \\[.2cm]  W\la \BM\end{bmatrix}: \ell_+^2(\BC^q)\to \begin{bmatrix} \sH'\\[.2cm]  \im T_\tht \la \BM\end{bmatrix}.
\]
Using this partitioning, formula  \eqref{lifting2}, the intertwining relation $T'\la =\la S_q$, and the fact that $\BM S_q=I$,  we see that
\begin{align*}
M(z)S_q&=\begin{bmatrix} \la S_q-z\la \\[.2cm]  W\la\end{bmatrix}=
\begin{bmatrix} T'  \la\\[.2cm]  W\la\end{bmatrix}-z
\begin{bmatrix}   \la\\[.2cm]  0\end{bmatrix}\\
&= S_p\begin{bmatrix}   \la\\[.2cm]  0\end{bmatrix}- z\begin{bmatrix}   \la\\[.2cm]  0\end{bmatrix}=(I-zS_p^*)S_p \begin{bmatrix}   \la\\[.2cm]  0\end{bmatrix}.
\end{align*}
If follows that
\begin{equation}
\label{auxr1}
E_p^*(I-zS_p^*)^{-1}M(z)S_q=E_p^*(I-zS_p^*)^{-1} (I-zS_p^*)S_p\begin{bmatrix}   \la\\[.2cm]  0\end{bmatrix}=0.
\end{equation}
Applying this to our formula for $\Phi_{22}$ we obtain
\begin{align*}
\Phi_{22}(z)&=E_p^*(I-zS_p^*)^{-1}M(z)(I-z\BM)^{-1}E_q\\
&=E_p^*(I-zS_p^*)^{-1}M(z)(E_qE_q^*+S_qS_q^*)(I-z\BM)^{-1}E_q\\
&=E_p^*(I-zS_p^*)^{-1}M(z)E_qE_q^*(I-z\BM)^{-1}E_q.
\end{align*}
Using
the definition of $\Phi_{12}$ in \eqref{PhisCLT2}, and the definition of  $\de_0$ in \eqref{CLTDeltas1}, we see that
\[
E_q^*(I-z\BM)^{-1}E_q=\de_0\Phi_{12}=\de_0^2V(z)^{-1}=E_q^*(I-\la^*\la)^{-1}E_qV(z)^{-1}.
\]
Together with $E_qE_q^*=I-S_qS_q^*$ and the identity \eqref{auxr1} the previous identity yields
\[
\Phi_{22}(z)=E_p^*(I-zS_p^*)^{-1}\begin{bmatrix} \la (I-z\BM) \\[.2cm]  W\la \BM\end{bmatrix}(I-\la^*\la)^{-1}E_qV(z)^{-1}.
\]
Finally, using the formula for $\BM$ given by the left hand side of \eqref{om2} and $S_q^*E_q=0$,  we see that
\[
\BM (I-\la^*\la)^{-1}E_q=(I-S_q^*\la^*\la S_q)^{-1}S_q^*Eq=0.
\]
Hence the above formula for $\Phi_{22}$ simplifies to
\[
\Phi_{22}(z)=E_p^*(I-zS_p^*)^{-1}\begin{bmatrix} \la \\[.2cm]  0 \end{bmatrix}(I-\la^*\la)^{-1}E_qV(z)^{-1}.
\]
Using the definition of $U$ in \eqref{defU}, this yields the second identity in \eqref{2Phis}. \end{proof}

The following result  in the  analogue of Theorem \ref{cltstrict} with the Redheffer representation of all solution  \eqref{RedSolCLT} being replaced by a linear fractional map.

\begin{thm}\label{cltstrict2}
Assume $S_ p$ is a minimal isometric lifting of $T^\prime$, and let $\la$ be a strict contraction mapping $\ell_+^2(\BC^q)$ into $\sH^\prime\subset \ell_+^2(\BC^p)$ satisfying the intertwining relation $T^\prime \la= \la S_q$. Then   all functions $X$ in $H_{p\times q}^\infty$ satisfying
\begin{equation}\label{clt2b}
\la = P_{\sH^\prime} T_X
\quad \mbox{and}\quad \|X\|_\infty \leq 1
\end{equation}
are given by
\begin{equation}\label{LFTSolCLT}
X(z)=\Big(\Up_{12}(z)+\Up_{11}(z)Y(z)\Big) \Big(\Up_{22}(z)+\Up_{21}(z)Y(z)\Big)^{-1}, \quad |z|<1.
\end{equation}
Here $Y$ is an arbitrary function in $H^\iy_{k\ts q}$ with $\|Y\|_\infty\leq 1$, and
\begin{align}
\Up_{11}(z)&=zE_p^*(I-zS_p^*)^{-1}\la E_q E_q^*(I-zS_q^*)^{-1}B_{\nabla}\de_1^{-1}+\nn \\
&\hspace{1cm}+\tht(z)\de_1^{-1}-z\tht(z)E_k^*T_\tht^*S_p\la (I-zS_q^*)^{-1}S_q^*B_{\nabla}\de_1^{-1},\nn \\
\Up_{21}(z)&=zE_q^*(I-zS_q^*)^{-1}B_\nabla \de_1^{-1}  \label{LFUps} \\
\Up_{12}(z)&=U(z) \de_0^{-1},\nn \\
\Up_{22}(z)&=V(z) \de_0^{-1}.\nn
\end{align}
Here $B_{\nabla}= (I-\la^*\la)^{-1}\la^*S_p^*T_\tht E_k$, the functions $U$ and $V$ are given by \eqref{defU} and \eqref{defV}, respectively,  and $\Delta_0$ and $\Delta_1$ are the positive definite matrices  given by
\begin{align}
\Delta_0^2 &=E_q^*(I-\la^* \la)^{-1}E_q \nn\\
\Delta_1^2 &=I_k+E_k^* T_\tht^* S_p \la (I-\la^* \la)^{-1}\la^* S_p^* T_\tht E_k. \label{Deltas2}
\end{align}
Moreover, the $(p+k)\times(q+p)$ coefficient matrix $\Up$ defined  by
\[
\Up=\mat{cc}{\Up_{11}&\Up_{12}\\ \Up_{21}& \Up_{22}},
\]
with $\Up_{11},$ $\Up_{12}$, $\Up_{21}$ and $\Up_{22}$ as above, is $J_1, J_2$-inner, where    $J_1$ and $J_2$ are given by $J_1=\diag(I_p, -I_q)$, and $J_2=\diag(I_{k}, -I_q)$.
\end{thm}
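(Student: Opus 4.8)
The plan is to derive Theorem~\ref{cltstrict2} from Theorem~\ref{cltstrict}, whose Redheffer representation \eqref{RedSolCLT} already parametrizes all solutions by the \emph{same} free functions $Y$. Thus it suffices to turn the Redheffer map into the linear fractional map \eqref{LFTSolCLT} by a purely algebraic manipulation of the coefficients. Since $\Phi_{12}(z)=\de_0 V(z)^{-1}$ is invertible by \eqref{2Phis}, I would set
\[
\Up_{22}=\Phi_{12}^{-1},\quad \Up_{21}=-\Phi_{12}^{-1}\Phi_{11},\quad \Up_{12}=\Phi_{22}\Phi_{12}^{-1},\quad \Up_{11}=\Phi_{21}-\Phi_{22}\Phi_{12}^{-1}\Phi_{11}.
\]
A direct substitution, using $I-\Phi_{11}Y=\Up_{22}^{-1}(\Up_{22}+\Up_{21}Y)$ together with the bookkeeping identity $\Up_{22}^{-1}\Up_{21}Y(\Up_{22}+\Up_{21}Y)^{-1}=\Up_{22}^{-1}-(\Up_{22}+\Up_{21}Y)^{-1}$, then gives, for every admissible $Y$,
\[
\Phi_{22}+\Phi_{21}Y(I-\Phi_{11}Y)^{-1}\Phi_{12}=(\Up_{12}+\Up_{11}Y)(\Up_{22}+\Up_{21}Y)^{-1};
\]
here $\Up_{22}+\Up_{21}Y$ is invertible exactly when $I-\Phi_{11}Y$ is, which is built into Theorem~\ref{cltstrict}. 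Hence the two maps have the same range.

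It then remains to check that these $\Up_{ij}$ are the ones in \eqref{LFUps}. The entries $\Up_{12}=\Phi_{22}\Phi_{12}^{-1}=U\de_0^{-1}$ and $\Up_{22}=\Phi_{12}^{-1}=V\de_0^{-1}$ are immediate from \eqref{2Phis}, and the formulas \eqref{Deltas2} for $\de_0,\de_1$ are literally those of Theorem~\ref{cltstrict}. For $\Up_{21}=-\Phi_{12}^{-1}\Phi_{11}=zV\de_0^{-2}E_q^*(I-z\BM)^{-1}B_\nabla\de_1^{-1}$, I would prove
\[
V\de_0^{-2}E_q^*(I-z\BM)^{-1}=E_q^*(I-zS_q^*)^{-1}
\]
by a resolvent manipulation analogous to the one establishing \eqref{2Phis} (see also \eqref{om2}), based on the formula \eqref{defOm1} for $\BM$ and on $\BM S_q=I$; this yields the stated form of $\Up_{21}$.

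The entry $\Up_{11}=\Phi_{21}+\Phi_{22}\Up_{21}$ is the most involved. Writing $\Phi_{21}(z)=\tht(z)\de_1-\tht(z)E_k^* T_\tht^* S_p\la(I-z\BM)^{-1}B_\nabla\de_1^{-1}$ and using the structural identity
\[
E_k^* T_\tht^* S_p\la B_\nabla=E_k^* T_\tht^* S_p\la(I-\la^*\la)^{-1}\la^* S_p^* T_\tht E_k=\de_1^2-I_k
\]
(which is exactly \eqref{Deltas2}), the two terms of $\Phi_{21}$ combine at $z=0$ to the constant $\tht_0\de_1^{-1}$, converting the awkward leading factor $\tht\de_1$ into the normalization $\tht\de_1^{-1}$ required in \eqref{LFUps}. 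After substituting $\Phi_{22}=UV^{-1}$ and the verified form of $\Up_{21}$, the remaining $z$-dependent contributions are brought into the prescribed shape by again converting resolvents of $\BM$ into resolvents of $S_q^*$ and by exploiting the intertwining relation $T'\la=\la S_q$ together with $\BM S_q=I$, so that the outcome matches the three terms in the formula for $\Up_{11}$. This last verification is the main obstacle: forcing the combination of the leading term $\tht\de_1$ of $\Phi_{21}$ with the $\Phi_{22}\Up_{21}$ contribution into the exact three-term shape \eqref{LFUps} requires several resolvent conversions between $\BM$, $S_q^*$ and $S_p$ and careful use of the structural identities involving $T_\tht$, $B_\nabla$ and $\de_1$.

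Finally, to see that $\Up$ is $(J_1,J_2)$-inner with $J_1=\diag(I_p,-I_q)$ and $J_2=\diag(I_k,-I_q)$, I would observe that the defining relations above express
\[
\Phi=\begin{bmatrix}-\Up_{22}^{-1}\Up_{21}&\Up_{22}^{-1}\\ \Up_{11}-\Up_{12}\Up_{22}^{-1}\Up_{21}&\Up_{12}\Up_{22}^{-1}\end{bmatrix},
\]
which, up to the permutation of its two block rows, is the Potapov--Ginzburg transform of $\Up$ with respect to the signatures $J_1$ and $J_2$; this transform interchanges precisely the two $\BC^q$ components carrying the negative signatures. Since a block-row permutation amounts to left multiplication by a constant unitary and hence preserves innerness, and $\Phi$ is inner by Theorem~\ref{cltstrict}, the standard equivalence ``$\Phi$ inner $\iff\ \Up$ is $(J_1,J_2)$-inner'' yields the claim. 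Equivalently, one may verify $\Up(z)^* J_1\Up(z)\le J_2$ on $\BD$ and $\Up^* J_1\Up=J_2$ a.e.\ on $\BT$ directly from $\Phi^*\Phi\le I$ and $\Phi^*\Phi=I$ together with the block relations.
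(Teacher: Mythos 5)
Your proposal is correct and follows essentially the same route as the paper: apply the Potapov--Ginzburg transform to the Redheffer coefficients of Theorem~\ref{cltstrict}, read off $\Up_{12},\Up_{22}$ from \eqref{2Phis}, convert resolvents of $\BM$ into resolvents of $S_q^*$ via the identity $\BM-S_q^*=\BM E_qE_q^*$ to obtain $\Up_{21}$ and $\Up_{11}$, and transfer innerness of $\Phi$ to $(J_1,J_2)$-innerness of $\Up$. The key identities you single out (in particular $E_k^*T_\tht^*S_p\la B_\nabla=\de_1^2-I_k$, which trades $\tht\de_1$ for $\tht\de_1^{-1}$) are exactly the ones the paper uses, so the only difference is that you sketch rather than fully execute the final resolvent bookkeeping for $\Up_{11}$.
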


\begin{proof}[\bf Proof.]
The fact that $\Phi_{12}(z)$ is invertible for each $z\in\BD$, with an analytic inverse, implies that we can apply the Potapov-Ginzburg transform pointwise, cf., Section 2.5 in \cite{AD08}, defining analytic matrix valued functions $\Up_{ij}$, $i,j=1,2$, on $\BD$ via
\begin{equation}\label{PGtrans1A}
\begin{aligned}
&\Up_{11}=\Phi_{21}-\Phi_{22}\Phi_{12}^{-1}\Phi_{11},\quad
\Up_{12}=\Phi_{22}\Phi_{12}^{-1},\\
&\Up_{21}=-\Phi_{12}^{-1}\Phi_{11},\quad
\Up_{22}=\Phi_{12}^{-1}.
\end{aligned}
\end{equation}
Following \cite{AD08}, we obtain that the identity
\[
\Phi_{22}+\Phi_{21}Y(I-\Phi_{11}Y)^{-1}\Phi_{12}
=(\Up_{12}+\Up_{11}Y)(\Up_{22}+\Up_{21}Y)^{-1}
\]
holds point wise on $\BD$ for any function $Y$ in $H^\infty_{k\ts q}$ with $\|Y\|_\infty\leq 1$. Moreover, since $\Phi$ in \eqref{defPhiA} is inner, we obtain that the coefficient matrix
\begin{equation}\label{UpBlockInfA}
\Up=\mat{cc}{\Up_{11}&\Up_{12}\\ \Up_{21}& \Up_{22}},
\end{equation}
is $J_1, J_2$-inner, where    $J_1=\diag(I_p, -I_q)$, and $J_2=\diag(I_{k}, -I_q)$, that is, for almost any  $z\in\BT$ we have $\Up(z)^*J_1 \Up(z)=J_2$.

From the results in the previous paragraph we conclude that in order to prove the theorem it suffices to show that the functions $\Up_{ij}$, $1\leq i, j \leq 2$, defined in  \eqref{PGtrans1A},  are also given by the right  hands of the formulas   in \eqref{LFUps}. For  $\Up_{12}$ and $\Up_{22}$ this follows directly from the two identities in \eqref{2Phis}.   So it remains to consider the functions $\Up_{11}$ and $\Up_{21}$.

We begin with $\Up_{21}$. Using the definition of  $\Up_{21}$ in \eqref{PGtrans1A},
the identity \eqref{Phi12min},  and the first identity in \eqref{PhisCLT1}, we see that
\begin{align*}
\Up_{21}(z)&=-\Phi_{12}(z)^{-1}\Phi_{11}(z)\\
&=-\Big(I_q-zE_q^*(I-zS_q^*)^{-1}\BM E_q\Big)\de_0\ts\\
&\hspace{2cm}\ts \Big(-z\de_0^{-1}E_q^*(I-z\BM)^{-1}B_\nabla\de_1^{-1}\Big)\\
&=z E_q^*(I-z\BM)^{-1}B_\nabla\de_1^{-1}+\\
&\hspace{2cm}-zE_q^*(I-zS_q^*)^{-1}\Big(z\BM E_qE_q^*\Big)(I-z\BM)^{-1}B_\nabla\de_1^{-1}.
\end{align*}
From \eqref{omts1} and  \eqref{omts2} we see that
\begin{equation}
\label{identomS}
\BM - S_q^*=\BM E_qE_q^*.
\end{equation}
Using the latter identity we obtain
\begin{align*}
&(I-zS_q^*)^{-1}\Big(z\BM E_qE_q^*\Big)(I-z\BM)^{-1}=\\
&\hspace{1cm}=(I-zS_q^*)^{-1}\Big(z\BM -zS_q^*\Big)(I-z\BM)^{-1}\\
&\hspace{1cm}=(I-zS_q^*)^{-1}\Big((I-zS_q^*)-(I-z\BM)\Big)(I-z\BM)^{-1}\\
&\hspace{1cm}=(I-z\BM)^{-1}-(I-zS_q^*)^{-1}.
\end{align*}
It follows that
\begin{align*}
\Up_{21}(z)&=zE_q^*(I-z\BM)^{-1}B_\nabla\de_1^{-1}+\\
&\hspace{.7cm}-zE_q^*(I-z\BM)^{-1}B_\nabla\de_1^{-1}+zE_q^*(I-zS_q^*)^{-1}B_\nabla\de_1^{-1}\\
&=zE_q^*(I-zS_q^*)^{-1}B_\nabla\de_1^{-1}.
\end{align*}
This proves  the second identity in \eqref{Deltas2}.

Next we deal with $\Up_{11}$. According to \eqref{PGtrans1A},  we have
\[
\Up_{11}(z)=\Phi_{21}(z)-\Phi_{22}(z)\Phi_{12}(z)^{-1}\Phi_{11}(z)=\Phi_{21}+\Phi_{22}(z)\Up_{21}(z).
\]
We first compute $\Phi_{22}\Up_{21}$ using the first identity in \eqref{PhisCLT1} and the second in \eqref{LFUps}. This yields
\begin{align*}
\Phi_{22}(z)\Up_{21}(z)&=z\Phi_{22}(z)E_q^*(I-zS_q^*)^{-1}B_\nabla\de_1^{-1}\\
&=A(z)+B(z),
\end{align*}
where
\begin{align*}
A(z)&=zE_p^*(I-zS_p^*)^{-1}\la E_qE_q^*(I-zS_q^*)^{-1}B_\nabla \de_1^{-1},\\[.2cm]
B(z)&=z\tht(z)E_k^*T_\tht^*S_p\la\BM (I-z\BM)^{-1}E_qE_q^*(I-zS_q^*)^{-1}B_\nabla \de_1^{-1}.
\end{align*}
Again using the identity in \eqref{identomS} we obtain
\[
z\BM(I-z\BM)^{-1}E_qE_q^*(I-zS_q^*)^{-1}=(I-z\BM)^{-1}-(I-zS_q^*)^{-1}.
\]
This yields
\begin{align*}
B(z)&=\tht(z)E_k^*T_\tht^*S_p\la(I-z\BM)^{-1}B_\nabla \de_1^{-1}+\\
&\hspace{2cm}-\tht(z)E_k^*T_\tht^*S_p\la(I-zS_q^*)^{-1}B_\nabla \de_1^{-1}.
\end{align*}
Recall that  $\Phi_{21}$ is given by the third identity in \eqref{PhisCLT1}. If follows that
\[
\Phi_{21}(z)+B(z)=\tht(z)\de_1-\tht(z)E_k^*T_\tht^*S_p\la(I-zS_q^*)^{-1}B_\nabla \de_1^{-1}.
\]
Hence
\begin{align}
\Up_{11}(z)&=\Phi_{21}(z) +\Phi_{22}(z)\Up_{21}(z)=\Phi_{21}(z) +A(z)+B(z)\nn\\
&=zE_p^*(I-zS_p^*)^{-1}\la E_qE_q^*(I-zS_q^*)^{-1}B_\nabla \de_1^{-1}+\nn \\[.2cm]
&\hspace{1.4cm} +\tht(z)\de_1-\tht(z)E_k^*T_\tht^*S_p\la(I-zS_q^*)^{-1}B_\nabla \de_1^{-1}.\label{LFUp11a}
\end{align}
To get the first identity in \eqref{LFUps} we have to do one additional step. Note that $I=(I-zS_q^*)-zS_q^*$. Hence
\begin{align*}
& E_k^*T_\tht^*S_p\la(I-zS_q^*)^{-1}B_\nabla \de_1^{-1}=\\
&\hspace{.5cm}=  E_k^*T_\tht^*S_p\la(I-zS_q^*)^{-1}\Big((I-zS_q^*)-zS_q^*\Big)B_\nabla \de_1^{-1}\\
&\hspace{.5cm}=  E_k^*T_\tht^*S_p\la B_\nabla \de_1^{-1}+\\
&\hspace{2cm}-z E_k^*T_\tht^*S_p\la(I-zS_q^*)^{-1}S_q^*B_\nabla \de_1^{-1}.
\end{align*}
Next, using the definitions of  $B_\nabla$ and $\de_1$  in Theorem \ref{cltstrict2}, we have
\begin{align*}
E_k^*T_\tht^*S_p\la B_\nabla \de_1^{-1}&=E_k^*T_\tht^*S_p \la   (I-\la^*\la)^{-1}\la^*S_p^*T_\tht E_k\de_1^{-1}\\
&=(\de_1^2-I_k)\de_1^{-1}=\de_1-\de_1^{-1}.
\end{align*}
It follows that
\begin{align*}
&\tht(z)E_k^*T_\tht^*S_p\la(I-zS_q^*)^{-1}B_\nabla \de_1^{-1}=\\
&\hspace{1cm}=\tht(z)\de_1
-\tht(z)\de_1^{-1}-z\tht(z)E_k^*T_\tht^*S_p\la(I-zS_q^*)^{-1}S_q^*B_\nabla \de_1^{-1}.
\end{align*}
Using the latter identity in \eqref{LFUp11a}, we obtain the first identity in \eqref{LFUps}.
\end{proof}

\paragraph{Comment on the Toeplitz corona problem.} The Toeplitz corona problem can be reduced to the special case of the Leech problem where $q=m$ and $K$ is identically equal to $I_m$. In that case the solvability condition is that  $T_GT_G^*\geq I$, and thus $T_GT_G^*$ is strictly positive. Being a special case of the Leech problem, the Toeplitz corona problem can be formulated as a commutant lifting problem of the form considered in this section, where $\Lambda=T_G^*(T_GT_G^*)^{-1}$ viewed as an operator mapping $\ell^2_+(\BC^m)$ into $\sH'=\im T_G^*$.
Note that in this case $\Lambda$ is an invertible contraction.

\begin{prop}\label{P:coronaCLT}
Let $\Lambda$ be an invertible contraction mapping $\ell^2_+(\BC^q)$ into $\sH'=\kr T_\Theta^*$, with $\Theta\in H^\iy_{p\ts k}$ an inner function, and assume that $\la$ intertwines $S_q$ with the compression of $S_p$ to $\sH'$. Then there exists  a function $G\in H^\iy_{m\ts p}$ such that   $T_G$  is right invertible, the space $\sH'=\im T_G^*$,  and $\Lambda = T_G^*(T_GT_G^*)^{-1}$ viewed as an operator mapping $\ell^2_+(\BC^m)$ into $\sH'$. In fact,  $T_G=\la^{-1}\Pi'$,
where  $\Pi': \ell^2_+(\BC^p)\to\sH'$ denotes the orthogonal projection onto $\sH'$.
\end{prop}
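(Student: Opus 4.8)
The plan is to verify directly that the candidate operator $M=\la^{-1}\Pi'$, where $\Pi'\colon\ell^2_+(\BC^p)\to\sH'$ is the orthogonal projection onto $\sH'$, is an analytic Toeplitz operator $T_G$, and then to read off the three asserted properties from the resulting formulas for $T_G$, $T_G^*$ and $T_GT_G^*$. Note that $M$ is bounded because $\la$ is invertible (so $\la^{-1}$ is bounded) and $\Pi'$ is a projection. Throughout I identify $q$ with $m$, as is forced by the conclusion $G\in H^\iy_{m\ts p}$ and by the phrase ``viewed as an operator mapping $\ell^2_+(\BC^m)$ into $\sH'$''.

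First I would record the identity $\Pi'S_p=T'\Pi'$ as operators from $\ell^2_+(\BC^p)$ into $\sH'$. This rests on the $S_p^*$-invariance of $\sH'=\kr T_\Theta^*$, equivalently on $S_p\im T_\Theta\subseteq\im T_\Theta$, which holds since $S_pT_\Theta=T_\Theta S_k$. Indeed, splitting $h\in\ell^2_+(\BC^p)$ as $h=\Pi'h+(I-\Pi')h$ with $(I-\Pi')h\in\im T_\Theta$, the second summand is mapped by $S_p$ back into $\im T_\Theta$ and hence annihilated by $\Pi'$, while on $\sH'$ the operator $\Pi'S_p$ is by definition $T'$. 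Feeding this into $M$ gives $MS_p=\la^{-1}\Pi'S_p=\la^{-1}T'\Pi'$. The intertwining hypothesis $T'\la=\la S_m$ together with invertibility of $\la$ yields $\la^{-1}T'=S_m\la^{-1}$ on $\sH'$, so that $MS_p=S_m\la^{-1}\Pi'=S_mM$. By the standard characterization of bounded operators intertwining the unilateral shifts, $M=T_G$ for a unique $G\in H^\iy_{m\ts p}$ with $\|G\|_\iy=\|M\|$.

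It then remains to compute with $T_G=\la^{-1}\Pi'$. Taking adjoints gives $T_G^*=\Pi'^*(\la^*)^{-1}$, where $\Pi'^*\colon\sH'\to\ell^2_+(\BC^p)$ is the inclusion. Since $\Pi'\Pi'^*=I_{\sH'}$, one finds $T_GT_G^*=\la^{-1}(\la^*)^{-1}=(\la^*\la)^{-1}$, which is invertible because $\la$ is; hence $T_G$ is right invertible, with right inverse $\la$ (indeed $T_G\la=\la^{-1}\Pi'\la=\la^{-1}\la=I$, as $\Pi'\la=\la$). Because $(\la^*)^{-1}$ maps $\ell^2_+(\BC^m)$ onto $\sH'$, we get $\im T_G^*=\Pi'^*(\sH')=\sH'$. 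Finally $T_G^*(T_GT_G^*)^{-1}=\Pi'^*(\la^*)^{-1}\la^*\la=\Pi'^*\la=\la=\Lambda$, as claimed.

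The main obstacle is the first step: establishing $\Pi'S_p=T'\Pi'$ and hence that $M$ intertwines the shifts, since everything else is a short algebraic manipulation once $M$ is known to be an analytic Toeplitz operator. The only point requiring care there is to keep track of the domains and codomains of $\la$, $\la^{-1}$, $\Pi'$ and their adjoints so that the compositions are well defined and the identifications $\sH'\subseteq\ell^2_+(\BC^p)$ are used consistently.
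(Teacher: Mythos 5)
Your proposal is correct and follows essentially the same route as the paper: both take the candidate $T=\la^{-1}\Pi'$, use $T'\la=\la S_m$ together with the lifting identity $\Pi'S_p=T'\Pi'$ to show $S_mT=TS_p$ (hence $T=T_G$ for some $G\in H^\iy_{m\ts p}$), and then read off $T_GT_G^*=(\la^*\la)^{-1}$, $\im T_G^*=\sH'$ and $T_G^*(T_GT_G^*)^{-1}=\Pi'^*\la$ by the same short algebra. The only difference is presentational: you derive $\Pi'S_p=T'\Pi'$ from the $S_p$-invariance of $\im T_\Theta$, while the paper invokes the block form of $S_p$ as an isometric lifting of $T'$, which is the same fact.
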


\begin{proof}[\bf Proof] Put $T:=\Lambda^{-1}\Pi'$. It suffices to show that $T$ is a Toeplitz operator since clearly $T$ is left invertible, $\im T^*=\sH'$, and
\[
T^*(TT^*)^{-1}
=\Pi'^*\Lambda^{-*} (\Lambda^{-1}\Pi'\Pi'^*\Lambda^{-*})^{-1}
=\Pi'^*\Lambda^{-*} (\Lambda^{-1}\Lambda^{-*})^{-1}=\Pi'^* \Lambda.
\]
To see that $T$ is Toeplitz, note that $T'\Lambda=\Lambda S_m$ implies $\Lambda^{-1}T'=S_m\Lambda^{-1}$. Using that $S_p$ is an isometric lifting
of $T'$, we find
\[
S_m T= S_m \Lambda^{-1}\Pi'=\Lambda^{-1}T'\Pi'=\Lambda^{-1}\Pi'S_p=T S_p,
\]
which proves our claim.
\end{proof}



\end{document}